\numberwithin{equation}{section}
\newtheorem{lem}{Lemma}[section]
\newtheorem{thm}[lem]{Theorem}
\newtheorem{coro}[lem]{Corollary}
\theoremstyle{definition}
\newtheorem{example}[lem]{Example}
\newtheorem{remark}[lem]{Remark} 
\newtheorem{defn}[lem]{Definition}
\newcommand{\boldR}{{\mathbb{R}}}
\newcommand{\field}[1]{\mathbb{#1}}
\newcommand{\R}{\field{R}}
\newcommand{\Z}{\field{Z}}
\renewcommand{\AA}{{\mathcal A}}
\newcommand{\PP}{{\mathcal P}}
\renewcommand{\AA}{{\mathcal A}}
\newcommand{\isdef}{\stackrel{\text{\tiny def}}{=}} 
\newcommand{\cM}{{\mathcal M}}
\newcommand{\cMR}{{\mathcal M}_R}
\title{Bernstein-Szeg\H{o} measures, Banach algebras, and scattering theory}
\author[J.~Geronimo]{Jeffrey~S.~Geronimo}
\address{JSG, School of Mathematics, Georgia Institute of Technology,
Atlanta, GA 30332--0160, USA and Tao Aoqing  Visiting Professor, Jilin University, Changchun, China }
\email{geronimo@math.gatech.edu}
\thanks{JSG is partially supported by Simons Foundation Grant \#210169. JSG would like to thank the JLU-GT Joint Institute for Theoretical Sciences at Jilin University where he was a visiting professor for its hospitality.}
\author[P.~Iliev]{Plamen~Iliev}
\address{PI, School of Mathematics, Georgia Institute of Technology,
Atlanta, GA 30332--0160, USA}
\email{iliev@math.gatech.edu}
\thanks{PI is partially supported by Simons Foundation Grant \#280940.}
\keywords{Bernstein-Szeg\H{o} measures, Banach algebras, scattering theory, orthogonal polynomials.}
\subjclass[2010]{47B36, 42C05}
\begin{document} 
\date{September 4, 2015}

\begin{abstract}
We give a simple and explicit description of the Bernstein-Szeg\H{o} type 
measures associated with Jacobi matrices which differ from the Jacobi matrix 
of the Chebyshev measure in finitely many entries. We also introduce a class of measures $\cM$ which parametrizes the Jacobi matrices with exponential decay and for each element in $\cM$ we define a scattering 
function. Using Banach algebras associated with increasing Beurling weights, we  
prove that the exponential decay of the coefficients in a Jacobi matrix is completely determined by the decay of the negative 
Fourier coefficients of the scattering function. Combining this result with the Bernstein-Szeg\H{o} type measures we provide different characterizations of the rate of decay of the entries of the Jacobi matrices for measures in $\cM$.
\end{abstract}

\maketitle

\section{Introduction}\label{se1}
Recently there has been interest in measures supported on the unit circle and 
the real line that have analytic weights \cites{do,gm,m,mms,simon,simon1,simon2,ds}. Given a positive Borel measure on a bounded subset of the real line an infinite 
Jacobi matrix can be uniquely associated with it and the problem of interest 
is to study the consequences on the measure when it is assumed that the 
coefficients in the Jacobi matrix tend to their asymptotic values 
exponentially fast. Here it is assumed that the limit of the off diagonal 
elements is not zero. For the nonexponential case this problem was studied in \cites{gc,ger1, ger2, gus}. The exponential case  was considered by the first author in \cite{ger3} and, using the techniques of Banach algebras 
introduced by Baxter \cite{ba}, necessary and sufficient conditions were 
given connecting smoothness properties of the measure with the decay of 
the (recurrence) coefficients in the associated Jacobi matrix. Recent results 
using Riemann-Hilbert techniques, pioneered by Deift and Zhou to solve 
problems in the asymptotics 
of orthogonal polynomials on the unit circle \cites{do,m,mms}, the books by Simon \cites{simon, simon2}, and the works \cites{ds,simon1} have brought renewed interest in this 
problem. In particular, responding to a question of Barry Simon to the first 
author, an extension of Baxter's theory to Banach algebras for analytic 
weights was given in \cite{gm}. In this work the tight connection between the negative Fourier coefficients of the $\mathcal S$ function and the recurrence coefficients was developed and the special role of Bernstein-Szeg\H{o} measures i.e.
 $d\mu=\frac{d\theta}{q(\theta)}$ where $q$ is a positive trigonometric 
polynomial was noted. These types of measures have only a finite number of 
nonzero recurrence coefficients and multiplication of weights on the unit 
circle with exponentially decaying recurrence coefficients by a Bernstein-Szeg\H{o} weight does not change the rate of exponential decay. The techniques in \cite{gm} are for the most part algebraic.

In this paper we introduce a class of measures which parametrizes the Jacobi matrices with exponentially decaying coefficients. Each element in this class gives rise to a scattering function which completely determines the rate of decay of these coefficients. We begin in Section~\ref{se2} with an explicit description of all Bernstein-Szeg\H{o} type measures using just the residue theorem and simple properties of orthogonal polynomials. These measures have been studied by Szeg\H{o} \cite{sz} in the absolutely continuous case
and Geronimus \cite{gr}, Geronimo and Case \cite{gc}, and Damanik and Simon \cite{ds} in the general case. Bernstein-Szeg\H{o} type  measures have an absolutely continuous part analogous to that given above
and perhaps a singular part consisting of a finite number of mass points with masses that have been coined \emph{canonical} in \cite{ds}.  Our results are slightly more detailed than those in the literature since we make explicit the connection between the degree of the trigonometric polynomial generating the  absolutely continuous part of the measure and the minimal index beyond which all  coefficients in the Jacobi matrix attain their asymptotic values. This section is of independent interest since a) all computations are simple and explicit, b) when the moment problem is determinate these measures weakly approximate the starting orthogonality measure, and c) the exponential decay of the recurrence coefficients of analytic absolutely continuous measures is unaffected by multiplication by Bernstein-Szeg\H{o} type weights. In Section~\ref{se3} we 
study the four possible maps between polynomials orthogonal on the unit circle and those on the real line. The various relations between the recurrence coefficients of polynomials orthogonal on the unit circle and those orthogonal on the real line were developed by Killip and Nenciu \cite{kn}. We have included these formulas as well as the explicit relation between the polynomials in order to keep the presentation self-contained. 
In Section~\ref{se4}, following Geronimo \cite{ger3}, we introduce the  Banach 
algebras associated with increasing Beurling weights and relate the decay of the recurrence coefficients to the membership of a certain function (the Jost function) in an appropriate Banach algebra.   
Using the results of Damanik and Simon as a guide, a useful class $\cM$ of analytic measures with positive canonical weights is introduced and the connection with Bernstein-Szeg\H{o} measures is discussed. Then, employing the results and techniques developed in Geronimo and Mart{\'{\i}}nez-Finkelshtein \cite{gm}, we introduce a close relative of the scattering function  $\mathcal{S}$  of Case and Chiu \cite{cc}. As a first result we obtain the theorem of Damanik and Simon \cite{ds} relating the exponential decay of the recurrence coefficients to their asymptotic values and the annulus of meromorphicity for the absolutely continuous part  of the orthogonality measure. The analog of this theorem for polynomials orthogonal on the unit circle can be found in the work of Nevai and Totik~\cite{nt}.  Next, simple necessary and sufficient conditions are given relating the rate of decay of the recurrence coefficients to their asymptotic values and  the rate of decay of the negative Fourier coefficients of the scattering function for measures in
$\cM$. These results are then used to describe different properties of the measures in $\cM$. It is here that the role of Bernstein-Szeg\H{o} measures
is decisive for simplifying the presentation by using  algebraic
arguments to bypass  certain subtleties when the measure has masses.            

\section{Bernstein-Szeg\H{o} weights}\label{se2}
Consider the recurrence formula 
\begin{equation}\label{reconon}
 a_{n+1} p_{n+1}(x)+b_n p_n(x)+ a_n p_{n-1}(x)= x p_n(x),
\end{equation}
with $b_n\in\boldR$ and $a_n>0,\ n>0$. With initial values 
$p_{-1}(x)=0,\ p_0(x)=1$ the Favard theorem says the solution of the
above equation gives a set of polynomials orthonormal with respect to some 
positive measure. Suppose $\lim a_n=a>0, \ \lim b_n= b$ and write two recurrences \cite{gc}, 
\begin{equation}\label{rec2}
\begin{split}
 &p_n(x)=\frac{a}{a_n}[(z-B_{n-1})p_{n-1}(x)+\frac{1}{z}\psi_{n-1}(z)]\\
 &\psi_n(z)=\frac{a}{a_n}\left[\frac{1}{z}\psi_{n-1}(z)+\left(\left(1-\frac{a_n^2}{a^2}\right)z-B_{n-1}\right) p_{n-1}(x)\right],
\end{split}
\end{equation}
where, $B_{n-1}=\frac{b_{n-1}-b}{a}$, $x=a(z+1/z)+b$ and $\psi_0(z)=1$. Here we 
let
$z=\frac{x-b}{2a}-\sqrt{(\frac{x-b}{2a})^2-1}$ where the branch of the square root chosen gives $|z|\le1$ for 
all $x$.
The difference between the above two equations yields
\begin{equation}\label{rec3}
\psi_n(z)=p_n(x)-\frac{a_n}{a}z p_{n-1}(x).
\end{equation}
It will be convenient to write the above two equations as the system
\begin{equation}\label{sys}
\Phi_n(z)=T(n,z)\Phi_{n-1}(z),
\end{equation}
where $\Phi_n(z)=\begin{pmatrix}p_n(x)\\\psi_n(z)\end{pmatrix}$, $\Phi_0(z)=\begin{pmatrix}1\\1\end{pmatrix}$, and
\begin{equation}\label{sys1}
T(n,z)=\frac{a}{a_n}\begin{pmatrix}z-B_{n-1}&\frac{1}{z}\\(1-\frac{a_n^2}{a^2})z-B_{n-1}&\frac{1}{z}\end{pmatrix}.
\end{equation}
Suppose now that $a_{n+1}=a$ and $b_n=b$ for all $n\ge n_0$. The second equation in \eqref{rec2} shows that $\hat\psi_n (z)=\hat\psi_{n_0}(z)$ for $n\geq n_0$, where 
\begin{equation}\label{eq4}
\hat\psi_n (z)=z^n\psi_n(z).
\end{equation}
Consider the first component of \eqref{rec2} with $n$ replaced by $n+1$. If we replace $z$ by $1/z$ and
then subtract the result from the original equation we obtain
\begin{equation}\label{eq11}
p_n(x)=\frac{z^{n+1}\hat\psi_{n_0}(1/z)-(1/z)^{n+1}\hat\psi_{n_0}(z)}
{z-1/z} \qquad \text{for }n\geq n_0.
\end{equation}

We begin by proving a more detailed version of Theorems 1.4 and 1.9 in \cite{ds}.
\begin{thm}\label{bpoly}
Let $\rho$ be a positive Borel measure supported on the real
line with recurrence coefficients $\{a_n\}^\infty_{n=1}$ and 
$\{b_n\}^\infty_{n=0}$, $a_n>0$, $b_n$ real.  Then $a_{n+1}=a$ and
$b_n=b$ for all $n\ge n_0$ if and only if
\begin{equation}\label{measone}
d\rho=\begin{cases}
\sigma (\theta)d x & x=2a\cos\theta+b,\quad 0<\theta<\pi\\
\sum^N_{i=1}\rho_i\delta (x-x_i)d x & x\notin[b-2a,b+2a],
\end{cases}
\end{equation}
with
\begin{equation}\label{meas2}
\sigma(\theta)=\frac{\sin \theta}{a\pi|\hat\psi_{n_0}(z)|^2},\qquad z=e^{i\theta}
\end{equation}
and
\begin{equation}\label{meas3}
\rho_i=\frac{(z_i-1/z_i)}{a\frac d{d x}
(\hat\psi_{n_0}(z)\hat\psi_{n_0}(1/z))}\Big|_{x=x_i}
\end{equation}
where $x_i=a(z_i+1/z_i)+b$, $z_i$ are the zeros of $\hat\psi_{n_0}(z)$ in the unit disk $|z|<1$.
If $a_{n_0}=a$ and $b_{n_0-1}\ne b$, then $\hat\psi _{n_0}$ is a
polynomial of degree $2n_0-1$ in $z$.  If $a_{n_0}\ne a$ then
$\hat\psi_{n_0}$ is a polynomial of degree $2n_0$.
In both cases $\hat \psi_{n_0}(0)>0$, and $\hat\psi_{n_0}(z)$ has
only real simple zeroes for $|z|\leq1$. Except possibly for $z=\pm1$, a zero 
of $\hat\psi_{n_0}(z)$ does not occur at a zero of $\hat\psi_{n_0}(1/z)$. At a 
zero $z_0$ of $\hat\psi_{n_0}$
\begin{equation}\label{decay}
p_n(x_0)=\frac{z_0^{n+1}\hat\psi_{n_0}(1/z_0)}{z_0-1/z_0}
\end{equation}
which shows that for $|z_0|<1$ the sequence $\{p_n(z_0)\}_n$ decays exponentially 
to zero.
\end{thm}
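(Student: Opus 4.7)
The plan is to use the explicit formula \eqref{eq11} as the main tool: I would first deduce the structural properties of $\hat\psi_{n_0}$ (degree, constant term, real simple zeros, no common zeros with $\hat\psi_{n_0}(1/z)$ away from $z=\pm 1$), and then verify the measure formula by a residue calculation that cleanly separates the absolutely continuous part from the point masses.

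For degree and positivity at $0$: substituting $\hat\psi_n = z^n\psi_n$ into \eqref{rec2} yields
\begin{equation*}
\hat\psi_n(z) = \frac{a}{a_n}\bigl[\hat\psi_{n-1}(z) + \bigl((1-a_n^2/a^2)z^{n+1} - B_{n-1}z^n\bigr)p_{n-1}(x)\bigr],
\end{equation*}
and since $z^{n-1}p_{n-1}(x)$ is a polynomial in $z$ of degree $2(n-1)$ with positive leading coefficient $\kappa_{n-1}a^{n-1}$ (where $\kappa_{n-1}$ is the leading coefficient of $p_{n-1}$), induction shows $\hat\psi_{n_0}$ is a polynomial of degree $2n_0$ when $a_{n_0}\ne a$ and of degree $2n_0-1$ when $a_{n_0}=a$ and $B_{n_0-1}\ne 0$. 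The constant term $\hat\psi_{n_0}(0)=\kappa_{n_0}a^{n_0}>0$ follows by reading off the lowest Laurent coefficient of $z^{n_0}p_{n_0}(x)-(a_{n_0}/a)z^{n_0+1}p_{n_0-1}(x)$.

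For the zeros: if $\hat\psi_{n_0}(z_0)=0$ with $|z_0|<1$, by \eqref{eq11} the sequence $(p_n(x_0))_{n\ge 0}$, $x_0=a(z_0+1/z_0)+b$, decays exponentially and is a nontrivial $\ell^2$ eigenvector (since $p_0\equiv 1$) of the bounded self-adjoint Jacobi matrix $J$ at eigenvalue $x_0$. This forces $x_0\in\R$, hence $z_0\in\R$, and simplicity follows from the one-dimensionality of Jacobi eigenspaces. A common zero $\hat\psi_{n_0}(z_0)=\hat\psi_{n_0}(1/z_0)=0$ with $z_0-1/z_0\ne 0$ would make \eqref{eq11} give $p_n(x_0)=0$ for all $n\ge n_0$; iterating the three-term recurrence backward then forces $p_0(x_0)=0$, a contradiction. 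This excludes both common zeros away from $z=\pm 1$ and (since non-real zeros on $|z|=1$ would be self-conjugate by reality of $\hat\psi_{n_0}$) non-real unit-circle zeros. Formula \eqref{decay} is immediate from \eqref{eq11}.

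For the measure: define $\rho$ by \eqref{measone}--\eqref{meas3}; positivity of $\rho_i$ follows from the real simple-zero structure together with $dx/dz=a(z^2-1)/z^2$. For $n,m\ge n_0$, I would use \eqref{eq11} to write $p_np_m(z-1/z)^2=A_n(z)A_m(z)$ with $A_k(z)=z^{k+1}\hat\psi_{n_0}(1/z)-z^{-(k+1)}\hat\psi_{n_0}(z)$, convert the a.c.\ integral to a contour integral on $|z|=1$, and expand. The cross terms $z^{n-m-1}+z^{m-n-1}$ contribute $\delta_{nm}$ via the residue at $z=0$, while the remaining two terms, interchanged by $z\mapsto 1/z$, give residues at the interior zeros $z_i$ of $\hat\psi_{n_0}$; a short calculation of $d(\hat\psi_{n_0}(z)\hat\psi_{n_0}(1/z))/dx$ at $z_i$ shows these cancel exactly the point-mass contribution $\sum_i\rho_ip_n(x_i)p_m(x_i)$. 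Orthogonality for indices below $n_0$, as well as the converse direction, follow from determinacy of the bounded Hamburger moment problem and Favard's theorem, which identify $\rho$ with the unique spectral measure of $J$. The main obstacle is matching the discrete weights \eqref{meas3} to the interior residues: the $z\mapsto 1/z$ symmetry of the integrand and the vanishing of $\hat\psi_{n_0}$ at $z_i$ are essential for reducing the calculation to a single clean sum.
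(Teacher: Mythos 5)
Your outline follows the paper's route (read off the structure of $\hat\psi_{n_0}$ from the recurrence, then verify orthogonality by a residue computation), and several ingredients are sound: the degree/constant-term induction, the backward-iteration argument excluding common zeros of $\hat\psi_{n_0}(z)$ and $\hat\psi_{n_0}(1/z)$, and the $\ell^2$-eigenvector argument for reality of the interior zeros (arguably cleaner than the paper's Christoffel--Darboux computation). But there are two genuine gaps.

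First, the orthogonality step proves too little. You only compute $\int p_np_m\,d\rho$ for $n,m\ge n_0$, i.e.\ the Gram matrix of $\{p_n\}_{n\ge n_0}$ with respect to $\rho$. To invoke determinacy and Favard you would need the orthonormal polynomials of $\rho$ to coincide with the $p_n$, equivalently $\int p_np_m\,d\rho=\delta_{nm}$ for \emph{all} $n,m\ge0$; the relations you establish say nothing about $\int p_n q\,d\rho$ for $q$ of degree $<n_0$, and you have not shown that the products $p_np_m$ with $n,m\ge n_0$ span enough polynomials to control the low-order moments of $\rho$. So the identification of $\rho$ with the spectral measure of $J$ does not follow. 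The paper avoids this by proving $\int p_nx^j\,d\rho=0$ for every $j<n$ when $n\ge n_0$ (the same residue computation works, since $z^n x^j(z-1/z)/\hat\psi_{n_0}(z)$ has no pole at $z=0$ for $j\le n-1$), and then running a downward induction on $n<n_0$ through the three-term recurrence. The converse direction suffers from the same defect, and in addition the assertion $b_{n_0}=b$ --- the sharp index, which is precisely what makes this theorem more detailed than Damanik--Simon --- requires the separate computation of $xp_{n_0}(x)-\tfrac12 p_{n_0+1}(x)$; determinacy alone does not produce it.

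Second, simplicity. One-dimensionality of Jacobi eigenspaces controls the geometric multiplicity of $x_0$ as an eigenvalue, not the order of $z_0$ as a zero of $\hat\psi_{n_0}$. To exclude a double zero at $|z_0|<1$ you must differentiate \eqref{eq11}, note that $v'=(\,\tfrac{d}{dx}p_n(x_0)\,)_n\in\ell^2$ satisfies $(J-x_0)v'=v$ with $v=(p_n(x_0))_n\in\ker(J-x_0)$, and contradict $\ker(J-x_0)\perp\operatorname{ran}(J-x_0)$ (the paper instead uses the differentiated Christoffel--Darboux identity). You also never address simplicity of a possible zero at $z=\pm1$, which is not cosmetic: it is exactly what makes $\sigma(\theta)\,dx$ integrable at the endpoints and hence makes your contour integral, and the measure itself, well defined. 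It can be recovered by the same backward-iteration trick, since a double zero at $z=1$ forces $p_n(b+2a)=-\hat\psi_{n_0}'(1)=0$ for all $n\ge n_0$ and hence $p_0\equiv 0$.
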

\begin{remark}
The polynomials are square summable at the zeros of $\hat\psi_{n_0}$ for $|z|<1$ and give rise to the eigenvectors  of the Jacobi matrix associated with equation~\eqref{reconon} at its  eigenvalues $x_0=a(z_0+1/z_0)+b$. The zeros of $\hat \psi_{n_0}$ 
for $|z|>1$ are called resonances. 
\end{remark}

\begin{remark}
By abuse of notation, we will often write the function $\sigma(\theta)$ in \eqref{meas2} as a function of $z$, simply as $\sigma(z)$. If we write 
\begin{equation}\label{sigex}
\sigma(z)=\frac{z-1/z}{2a\pi i \hat\psi_{n_0}(z) \hat\psi_{n_0}(1/z)},
\end{equation}
then equation~\eqref{meas3} can be written as

\begin{equation}\label{newmas}
\rho_i=\frac{2\pi i}{\frac{d}{dx} \sigma(z)^{-1}}\Bigg|_{x=x_i}.
\end{equation}
Such masses are said to be canonical (see Definition~\ref{mclass},\ equation~\eqref{rhoj}).
\end{remark}
If $\Psi_n$ and $\Phi_n$ are any two solutions of equation~\eqref{sys} it is 
easy to see that
\begin{equation}\label{wr}
(\Psi_n)^T\begin{pmatrix} 0&-1\\1&0\end{pmatrix}\Phi_n,
\end{equation}
is independent of $n$. Thus, if we 
let $\Psi_n=\Phi^1_n=\begin{pmatrix}p_n^1\\\psi^1_n\end{pmatrix}$ where $p^1_n$ 
are the orthogonal polynomials of the second kind (i.e $p^1_0=0, \ p^1_1(x)=\frac{1}{a_1}$ so $\psi^1_1(z)=\frac{1}{a_1}$), we find that $\psi^1_n(z)p_n(x)-\psi_n(z)p^1_n(z)=\frac{1}{a_1}(p_1(x)-\psi_1(z))=\frac{z}{a}$ where equation~\eqref{rec3} has been used to obtain the last equality.  At a zero $z_0$ of $\psi_n$ we find $\hat\psi^1_n(z_0)=z_0^n\psi^1_n(z_0)=\frac{z_0}{az_0^{-n}p_n(z_0)}=\frac{z_0-1/z_0}{a\hat\psi_n(1/z_0)}$ which gives another representation of $\rho_i$,
\begin{equation}\label{alterep}
\rho_i=\frac{\hat\psi^1_{n_0}(z)}{\frac{d}{dx}\hat \psi_{n_0}(z)}\Bigg|_{x=x_i}.
\end{equation}

\begin{proof} Without loss of generality we can take $a=1/2$ and $b=0$. If $a_{n+1}=1/2$ 
and $b_n=0$ then the second component
of \eqref{rec2} gives
\begin{equation}\label{onehalf}
\psi_{n} (z)=\frac{1}{2a_{n}}((1-4a^2_n)z-2b_{n-1})p_{n-1}(x)+\frac1{2a_{n}}\frac{\psi_{n-1}(z)}z
\end{equation}
so that $\hat\psi_n (z)=\hat\psi_{n_0}(z)$ for $n\geq n_0$.  
From the initial condition $\psi_0=1=p_0$ and
the fact that $z^n p_{n-1}(x)$ is a polynomial in $z$ of exact
degree $2n-1$ it follows by induction  that
$\hat\psi_n(z)$ is a polynomial of degree $2n-1$ if $a_n=1/2$ and
$b_{n-1}\ne 0$ and degree $2n$ if $a_n\ne1/2$.
The fact that $\hat\psi$ has real coefficients shows that for $|z|=1$, $\hat\psi_{n_0}(1/z)=\overline{\hat\psi_{n_0}(z)}$. Thus  the interlacing properties of the zeros of orthogonal polynomials \cite{sz} and \eqref{eq11} show that $\hat\psi_{n_0}$ has no zeros for $|z|=1$ except perhaps at $z=1$, or $z=-1$ and if these zeros are present they must be simple. Also a zero of $\hat\psi_{n_0}$ cannot be a zero of $\psi_{n_0}(1/z)$. At the zeros of $\hat\psi_{n_0}(z)$, $p_n$ has the form given by equation~\eqref{decay}.  To see that the zeros of $\hat\psi$ for $|z|<1$ are real and simple let $p_n^1$ and $p_n^2$ be any two solution of the three term recurrence formula. Then standard manipulations yield,
\begin{align}\label{CD}
a_n(p^1_n(x)p^2_{n-1}(y)-p^1_{n-1}(x)p^2_n(y))&=(x-y)\sum_{i=0}^{n-1}p^1_i(x)p^2_i(y)\\&\nonumber \quad +a_0(p^1_0(x) p^2_{-1}(y)-p^1_{-1}(x)p^2_0(y)).
\end{align}
Setting $p^1=p^2=p$ and $y=\bar x$ then using \eqref{rec3} yields for $n>n_0$,
\begin{align*}
\Im\left(\frac{1}{z}\psi_n(z)\overline{p_n(x)}\right)=(\Im(z)+\Im(1/z))\sum_{i=0}^{n-1}|p_i(x)|^2+\Im(1/z)|p_n(x)|^2,
\end{align*}
which shows the reality of the zeros of $\psi_n$ for $|z|<1$. Writing $x$ and $y$ in terms of $z$ and $w$, then taking the 
limit $w\to z$  in \eqref{CD} and using \eqref{rec3} we find for real $z$,
\begin{align*}
\left(\frac{1}{z}\psi_n(z)\right)'p_n(x)-\frac{1}{z}\psi_n(z)\frac{d}{dz}p_n(x)=(1-\frac{1}{z^2})\sum_{i=0}^{n-1}p_i(x)^2-\frac{1}{z^2}p_n(x)^2,
\end{align*}
which shows that the zeros of $\psi_n(z)$ for $|z|<1$ are simple.
Since $\hat\psi _n(0)=\frac1{2a_n}\,\hat\psi_{n-1}(0)$ we find
$\hat\psi_n(0)=\prod^n_{i=1}\frac1{2a_i}>0$. We now show that the 
polynomials $p_n$ are orthonormal with respect to the measure given 
by \eqref{meas2} and \eqref{meas3}. To this end consider the integral
\begin{equation}\label{eq12}
I=\int p_n (x)x^jd\rho (x)\end{equation} for $j<n$,  and substitute 
the expression above for $\rho(x)$ to obtain
\begin{equation}\label{eq13}
I=\int_{-1}^1 p_n(x)x^j\sigma(\theta)dx+\sum^N_{i=1}
\rho_ip_n(x_i)x^j_i.\end{equation}
The simplicity of the zeros of $\hat \psi_{n_0}$ at $z=1$ or $-1$, if 
they exist, shows the above integral is well defined. For $n\ge n_0$ the 
integral in the above expression can be rewritten using \eqref{eq11}
as
$$\int^1_{-1} p_n(x)x^j\sigma(\theta)dx=\frac{1}{2i}\int^\pi_{-\pi}
z^{n+1}\hat\psi_{n_0} (1/z)x^j\sigma(\theta)\, d\theta.$$
Writing $|\hat\psi_{n_0}(z)|^2=\hat\psi_{n_0}(z)\hat\psi_{n_0}(1/z)$  we see 
that the  integrand is analytic except at the zeros of $\hat\psi_{n_0}(z)$ so 
the residue theorem shows,
\begin{equation}\label{resione}
\frac{1}{2i}\int^\pi_{-\pi} z^{n+1}\hat\psi_{n_0}(1/z)x^j\sigma(\theta)\, d\theta = -\sum^N_{i=1}\rho_i p_n (x_i)x^j_i ,
\end{equation} where we have used \eqref{decay}. 
Substituting \eqref{resione} into 
\eqref{eq13} gives that \eqref{eq12} is equal to zero. That the integral 
of $p_n^2$ is equal to 1 follows from the above arguments after 
using equation~\eqref{eq11} to eliminate $p_n$ and utilizing the 
fact that the residue at $z=0$ is equal to one. For $n<n_0$ the result
follows by induction with the aid of the three term recurrence formula. 

To show the other direction assume that $\rho$ has the form indicated. We prove first that for $n\ge n_0$ the orthonormal polynomials $p_n(x)$ associated
with $\rho$ are given by \eqref{eq11}. Since the right-hand side of formula \eqref{eq11} is invariant under the 
transformation $z\to 1/z$ and since $\hat\psi_{n_0}(0)>0$ we see indeed that equation \eqref{eq11} defines a polynomial in $x$ of degree
$n$. By repeating the above arguments it follows that $p_n(x)$ are orthonormal with respect to $\rho$.

Equation \eqref{eq11} implies that the polynomials 
$p_n$ satisfy the recurrence formula
$$\frac{1}{2} \, p_{n+2}(x) + \frac{1}{2}\, p_{n}(x)=x p_{n+1}(x)$$
for $n\ge n_0$ so that $a_{n+1}=1/2$ and $b_{n+1}=0$ for $n\ge n_0$.
Moreover, using \eqref{eq11} we see that 
$$xp_{n_0}(x)-\frac{1}{2}p_{n_0+1}(x)=\frac{z^{n_0}\hat\psi_{n_0}(1/z)-(1/z)^{n_0}\hat\psi_{n_0}(z)}
{2\left(z-1/z\right)},$$
and since the right-hand side is a polynomial in $x$ of degree at most $n_0-1$, we deduce that $b_{n_0}=0$.
\end{proof}
The only if part of above theorem was known to Geronimus \cite{gr} (see also \cite{gc} appendix A). 

The above measures provide a sequence of approximating measures for any orthogonality measure $\rho(x)$. The result below is well known but we include it for convenience.  
Let $\{ a_n, b_{n-1}\}_{n\ge 1}$ be the coefficients of the orthonormal polynomials $p_n(x)$ corresponding to the measure $\rho(x)$ and let us consider the measure $\rho^{n_0}(x)$ associated with the coefficients $\{a_n^{n_0}, b^{n_0}_{n-1}\}_{n\geq 1}$ where
$$a^{n_0}_i=\begin{cases}a_i & i<n_0\\ a_{n_0} & i\ge n_0\end{cases}\qquad\text{ and }\qquad
b^{n_0}_i=\begin{cases}b_i & i<n_0-1\\ b_{n_0} & i\ge n_0-1.\end{cases}$$
\begin{thm}
Suppose that the moment problem is determinate. Then $\rho^{n_0}(x)$
converges weakly to $\rho(x)$ as $n_0\rightarrow\infty$.
\end{thm}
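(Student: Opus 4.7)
The plan is to use the classical method of moments: establish that all moments of $\rho^{n_0}$ converge to those of $\rho$ as $n_0\to\infty$, and then invoke determinacy of the moment problem to upgrade moment convergence to weak convergence of probability measures.

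First, I would express the $k$-th moment of any orthogonality measure $\nu$ with recurrence coefficients $\{a_n,b_{n-1}\}$ as a matrix element of the associated Jacobi matrix. Since $p_0=1$ is an orthonormal vector, iterating \eqref{reconon} yields $m_k(\nu) := \int x^k\,d\nu(x) = (J^k)_{0,0}$, where $J$ is the tridiagonal Jacobi matrix with $J_{i,i}=b_i$ and $J_{i,i+1}=J_{i+1,i}=a_{i+1}$. Denote by $J$ and $J^{(n_0)}$ the Jacobi matrices of $\rho$ and $\rho^{n_0}$ respectively. Expanding $(J^k)_{0,0}$ as a sum over nearest-neighbour walks of length $k$ on $\boldZ_{\ge 0}$ starting and ending at $0$, the highest vertex visited is bounded by $\lfloor k/2\rfloor$, so the value depends only on the entries $J_{i,j}$ with $i,j \le \lfloor k/2\rfloor$. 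By the definition of $\{a_i^{n_0}, b_{i-1}^{n_0}\}$, these entries agree with those of $J^{(n_0)}$ as soon as $\lfloor k/2\rfloor \le n_0-2$. Therefore, for each fixed $k$,
\begin{equation*}
m_k(\rho^{n_0}) = m_k(\rho) \qquad \text{for all } n_0 \text{ sufficiently large},
\end{equation*}
and in particular $m_k(\rho^{n_0})\to m_k(\rho)$ as $n_0\to\infty$.

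Next, I would upgrade this to weak convergence. Each $\rho^{n_0}$ is a probability measure, and the second moments are eventually equal to $m_2(\rho)$; hence by Chebyshev's inequality, the family $\{\rho^{n_0}\}$ is tight. By Helly selection, any subsequence has a further subsequence converging weakly to some probability measure $\tilde\rho$. The uniform-in-$n_0$ bounds on moments of every order, already established in the previous step, yield uniform integrability of $x^k$ against $\rho^{n_0}$ for each $k$, so $m_k(\tilde\rho) = \lim m_k(\rho^{n_0}) = m_k(\rho)$ for all $k\ge 0$. Determinacy of the moment problem for $\rho$ then forces $\tilde\rho=\rho$, so every weak subsequential limit equals $\rho$ and the full sequence $\rho^{n_0}$ converges weakly to $\rho$.

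The main obstacle is the passage from convergence of moments to weak convergence when the supports are possibly unbounded: weak convergence of probability measures does not, in general, force convergence of individual moments, since $x^k$ is unbounded. This is circumvented almost for free here, because the step-one assertion gives not merely boundedness but actual equality $m_k(\rho^{n_0})=m_k(\rho)$ for all large $n_0$, which supplies the uniform integrability needed to transfer moments to any weak subsequential limit and, combined with determinacy, closes the argument.
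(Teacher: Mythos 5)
Your proof is correct and takes essentially the same approach as the paper: the recurrence coefficients of $\rho^{n_0}$ and $\rho$ agree up to index $n_0$, so every fixed moment eventually coincides with that of $\rho$, and determinacy then upgrades this to weak convergence. The paper derives the moment agreement from the identity $p_n^{n_0}=p_n$ for $n\le n_0$ rather than from a walk expansion of $(J^k)_{0,0}$, and leaves the standard method-of-moments step (tightness, Helly selection, uniform integrability) implicit, but these are only presentational differences.
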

\begin{proof}
Let $\{p_n^{n_0}\}$ be the orthonormal polynomials with respect to $\rho^{n_0}(x)$. 
Since $p_n^{n_0}=p_n$ for $n\le n_0$ we see that the first $2n_0$ moments for $\rho^{n_0}$ and $\rho$ are the same. The result follows since the moment problem is determinate. 
\end{proof}

\section{Szeg\H{o} maps}\label{se3}

We now consider the relation between polynomials orthogonal on the
unit circle and those orthogonal on the real line. If $\mu$ is a positive Borel measure with infinite
support on the unit circle there is a unique sequence of polynomials $\{\phi_n\}_{n\ge0}$, where $\phi_n$ is of exact degree $n$ in
$z=e^{i\theta}$ with positive leading coefficient $k_n$, such that 
$$\int_{-\pi}^{\pi} \phi_n(z)\overline{\phi_j(z)}d\mu=\delta_{n,j}.$$
These polynomials satisfy the following recurrence formula
\begin{equation}\label{recopuc}
\phi_n(z)=\frac{k_n}{k_{n-1}}(z\phi_{n-1}(z)-\alpha_n\overleftarrow\phi_{n-1}(z)), 
\end{equation}
where $\overleftarrow\phi_n(z)=z^n\bar{\phi_n}(1/z)$ and
\begin{equation}\label{alp}
\alpha_n=-\frac{\phi_n(0)}{k_n}.
 \end{equation}
It follows from the orthonormality of the polynomials that 
\begin{equation}\label{receq}
 1=\frac{k_n^2}{k_{n-1}^2}(1-|\alpha_n|^2).
\end{equation}
We shall call the $\alpha_n$ recurrence coefficients.

\begin{thm}\label{opuc}
Suppose that $w(x)$ is a positive weight function on $(-1,1)$ and let  $\{a_n, b_{n-1}\}_{n\geq 1}$ be 
recurrence coefficients for the corresponding orthonormal polynomials $\{p_n(x)\}$.  For $-\pi<\theta<\pi$ let
\begin{align*}
f_1(\theta)&= \frac{w(\cos\theta)}{|\sin\theta|}\\
f_2(\theta)&= |\sin\theta| w(\cos\theta)\\
f_3(\theta)&= \sqrt{\frac{1+\cos\theta}{1-\cos\theta}}\, w(\cos\theta)\\
f_4(\theta)&= \sqrt{\frac{1-\cos\theta}{1+\cos\theta}}\, w(\cos\theta)\end{align*}
where we assume in each of the four cases above that $w(x)$ is such that 
$f_i$, $i=1,2,3,4$ is a well defined weight function on the unit circle.
Let $\{\phi_n(z)\}$ be the orthonormal polynomials on the unit circle associated with $f_i$ and
$\alpha_n$ the associated recurrence coefficients.  
Then we have the following relations.
For Case~1,
\begin{equation}\label{pn1}
p_n(x)=2\left(1-\frac{\phi_{2n+2}(0)}{k_{2n+2}}\right)^{-1/2}
\left(\frac{z^{-n-1}\phi_{2n+2}(z)-z^{n+1}\phi_{2n+2}(1/z)}{z-1/z}\right)
\end{equation}
$$a^2_n=\frac{1}{4}(1+\alpha_{2n})(1-\alpha^2_{2n+1})(1-\alpha_{2n+2}),$$
and
$$b_n=\frac{1}{2}[(1-\alpha_{2n+2})\alpha_{2n+1}-(1+\alpha_{2n+2})\alpha_{2n+3}].$$
For Case~2,
\begin{equation}\label{pn2}
p_n(x)=\left(1+\frac{\phi_{2n}(0)}{k_{2n}}\right)^{-1/2}
\left(z^{-n}\phi_{2n}(z)+z^n\phi_{2n}(1/z)\right)
\end{equation}
$$a^2_n=\frac{1}{4}(1-\alpha_{2n-2})(1-\alpha^2_{2n-1})(1+\alpha_{2n}),$$
and
$$
b_n=\frac{1}{2}[(1-\alpha_{2n})\alpha_{2n+1}-(1+\alpha_{2n})\alpha_{2n-1}].$$
For Case~3,
\begin{equation}\label{pn3}
p_n(x)=\sqrt{\frac2{1-\frac{\phi_{2n+1}(0)}{k_{2n+1}}}}\left(
\frac{z^{-n}\phi_{2n+1}(z)}{z-1} + \frac{z^n\phi_{2n+1}(1/z)}{1/z-1}\right)
\end{equation}
$$a^2_n=\frac{1}{4}(1+\alpha_{2n-1})(1-\alpha^2_{2n})(1-\alpha_{2n+1}),$$
and
$$
b_n=\frac{1}{2}[(1-\alpha_{2n+1})\alpha_{2n}-(1+\alpha_{2n+1})\alpha_{2n+2}].$$
For Case~4,
\begin{equation}\label{pn4}
p_n(x)=\sqrt{\frac2{1+\frac{\phi_{2n+1}(0)}{k_{2n+1}}}}\left(
\frac{z^{-n}\phi_{2n+1}(z)}{z+1} + \frac{z^n\phi_{2n+1}(1/z)}{1/z+1}\right)
\end{equation}
$$a^2_n=\frac{1}{4}(1-\alpha_{2n-1})(1-\alpha^2_{2n})(1+\alpha_{2n+1})$$
and
$$b_n=\frac{1}{2}[(1-\alpha_{2n+1})\alpha_{2n+2}-(1+\alpha_{2n+1})\alpha_{2n}].$$
\end{thm}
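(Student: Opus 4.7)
The plan is to handle the four cases in parallel, exploiting a common structure. The starting observation is that each weight $f_i(\theta)$ depends only on $\cos\theta$ and $|\sin\theta|$, hence is even in $\theta$. This symmetry forces the Verblunsky coefficients $\alpha_n$ to be real and the orthonormal polynomials $\phi_n(z)$ to have real coefficients; in particular $\overleftarrow{\phi_n}(z)=z^n\phi_n(1/z)$, so the Szeg\H{o} recurrence \eqref{recopuc} reduces to
$$\phi_n(z)=\frac{k_n}{k_{n-1}}\bigl(z\phi_{n-1}(z)-\alpha_n z^{n-1}\phi_{n-1}(1/z)\bigr),\qquad \alpha_n\in\R.$$

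Next, for each case I would verify that the proposed right-hand side defines a polynomial of degree exactly $n$ in $x$. In Case~2 the Laurent polynomial $z^{-n}\phi_{2n}(z)+z^n\phi_{2n}(1/z)$ is invariant under $z\leftrightarrow 1/z$ and is therefore a polynomial in $z+1/z=2x$; computing its leading $z^n$ coefficient and comparing with $(z+1/z)^n$ pins down the leading term in $x^n$ and shows the normalization in \eqref{pn2} is correct. In Case~1 both numerator and denominator are antisymmetric under $z\leftrightarrow 1/z$, so the quotient is symmetric of the right degree; the apparent poles at $z=\pm 1$ are removable since the numerator vanishes there. In Cases~3 and~4 a direct substitution (using $\alpha_n\in\R$) shows that the numerator vanishes at $z=1$ and $z=-1$ respectively, so the factor $1/(z\mp 1)$ is removable and the resulting Laurent polynomial is symmetric of degree $n$ in $x$.

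Orthonormality against $w(x)\,dx$ is then established by the change of variables $x=\cos\theta$, $dx=-\sin\theta\,d\theta$. The specific form of $f_i$ is chosen precisely so that the extra factors in each candidate $p_n(x)$ (the $(z-1/z)^{-1}$ in Case~1 and the $(z\pm 1)^{-1}$ in Cases~3 and~4) combine with $w(\cos\theta)|\sin\theta|$ to produce integrals of the form
$$\int_{-\pi}^{\pi} z^{\pm k}\,\phi_N(z)\,\overline{\phi_N(z)}\, f_i(\theta)\,d\theta.$$
Expanding the symmetric or antisymmetric combinations in $p_n$ and $p_m$, these reduce via OPUC orthogonality to $\delta_{nm}$, provided the normalization constants are those stated in \eqref{pn1}--\eqref{pn4}. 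A short residue computation at $z=0$ confirms the $n=m$ case.

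Finally, the recurrence coefficients $a_n$ and $b_n$ are obtained by applying \eqref{recopuc} and \eqref{receq} to the explicit formulas for $p_n(x)$ and reading off the coefficients when $xp_n(x)$ is written as $a_{n+1}p_{n+1}(x)+b_np_n(x)+a_np_{n-1}(x)$. In Cases~2, 3, 4 one step of the Szeg\H{o} recurrence suffices to pass between the adjacent $\phi$ appearing in $p_n$ and $p_{n\pm 1}$; Case~1 requires two steps. The main obstacle is the bookkeeping: the index shifts and the products $(1\pm\alpha_{2n+?})(1-\alpha_{2n+?}^2)(1\mp\alpha_{2n+?})$ appearing in $a_n^2$ vary across the cases. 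However, Cases~1, 3, 4 are structurally Case~2 twisted by Bernstein--Szeg\H{o}-type weight factors $(1\pm\cos\theta)^{\pm 1/2}$, so once Case~2 is treated cleanly the remaining three follow by a uniform template.
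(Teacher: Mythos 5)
Your plan follows essentially the same route as the paper's proof: real coefficients from the evenness of $f_i$, polynomiality and degree via the $z\leftrightarrow 1/z$ symmetry and the removable factors at $z=\pm1$, orthonormality by the change of variables $x=\cos\theta$ reduced to OPUC orthogonality (the paper carries out only Case~3 and notes the rest are analogous), and the coefficient formulas by reading off leading coefficients of $p_n$ from the three-term recurrence together with the Szeg\H{o} recurrence and \eqref{receq}. The remaining work is exactly the bookkeeping you identify, so the proposal is correct in approach and matches the paper.
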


The connections between the recurrence coefficients for polynomials orthogonal on the unit circle and those orthogonal
on the real line were studied by Killip and Nenciu \cite{kn}. We include a proof of these formulas and the explicit connection between the polynomials for the convenience of the reader.

\begin{proof} We begin by noting that the right-hand sides of 
equations~\eqref{pn1}--\eqref{pn4} are invariant under the transformation
$z\to 1/z$. The symmetry of $f_i$ with respect to $\theta$ implies that the coefficients in $\phi_n$ are all real. Also the numerators in equations~\eqref{pn1} and \eqref{pn3}  vanish for $z=1$ while the numerators of \eqref{pn1} and \eqref{pn4} vanish for $z=-1$ which shows that each of the above functions is a polynomial in $x$. Writing
$$\phi_n(z)=\sum^n_{i=0}\phi_{n,i}z^i \text{ and }
\hat \phi_n(z)=\sum^n_{i=0}\phi_{n,n-i}z^i,$$
substituting these formulas into \eqref{pn1}--\eqref{pn4} and using
the fact that $\phi_{n,n}>|\phi_{n,0}|$ for all $n$ we see that $p_n(x)$ are
polynomials in $x$ of degree $n$.  We now show that
\begin{equation}\label{orone}
I=\int^1_{-1} p_n(x) x^j w(x)dx=0\qquad 0\le j<n\end{equation}
and
\begin{equation}\label{ortwo}
\int^1_{-1} p^2_n(x)w(x)dx=1.
\end{equation}
Since the techniques are the same for each of \eqref{pn1}--\eqref{pn4}
(and are those given in Szeg\H{o}~\cite{sz}) we shall show the above
relations only for \eqref{pn3}.  Substituting the right-hand side in equation
\eqref{pn3} into \eqref{orone} yields
\begin{align*}
I&=c_n\int^\pi_0 z^{-n}\frac{\phi_{2n+1}(z)}{z-1}\, x^j(1-\cos\theta)f_3(\theta)d\theta\\
&\quad +c_n\int^\pi_0 z^n\frac{\phi_{2n+1}(1/z)}{1/z-1} \, x^j(1-\cos\theta)
f_3(\theta)d\theta,
\end{align*}
where $c_n=\sqrt{2/(1-\phi_{2n+1}(0)/k_{2n+1})}$.
Letting $\theta\to-\theta$ in the second integral on the right-hand side of
the above equation yields
\begin{align*}
I&=c_n\int^\pi_{-\pi} z^{-n} \frac{\phi_{2n+1}(z)}{z-1}\, x^j(1-\cos\theta)
f_3(\theta)d\theta\\
&= -\frac{c_n}{2}\int^\pi_{-\pi} z^{-n}\phi_{2n+1}(z)x^j(1-1/z)f_3(\theta)d\theta.
\end{align*}
Since $x=\frac{z+1/z}2$ we see using the orthogonality of $\phi_{2n+1}(z)$ to
$z^{-k}$ for $k=0,\dots, 2n$ that $I=0$ when $0\le j<n$.  For the orthonormality write
\begin{align*}
\int^1_{-1}p_n(x)^2w(x)dx&=c^2_n\int^\pi_0 \Biggl( z^{-2n}
\frac{\phi^2_{2n+1}(z)}{(z-1)^2}\\
&\qquad  +\frac{z^{2n}\phi_{2n+1}(1/z)^2}{(1/z-1)^2}
+\frac{2\phi_{2n+1}(z)\phi_{2n+1}(1/z)}{(z-1)(1/z-1)}\Biggr)
(1-\cos\theta)f_3(\theta)d\theta.
\end{align*}
This can be rewritten as
$$\frac{c^2_n}{2}\int^\pi_{-\pi}(-z^{-2n-1}\phi^2_{2n+1}(z)+
\phi_{2n+1}(z)\phi_{2n+1}(1/z))f_3 (\theta)d\theta=
\frac{c^2_n}{2} (1-\phi_{2n+1}(0)/k_{2n+1}).$$
This yields the orthonormality relations.
In order to prove the relations among the recurrence coefficients we write
$$p_n(x)=\sum^n_{i=0} p_{n,i}x^i.$$
Then from the three term recurrence formulas we find
\begin{equation}\label{recan}
a_n=\frac{p_{n-1,n-1}}{p_{n,n}}\end{equation}
and
\begin{equation}\label{recbn}
b_n=\frac{p_{n,n-1}}{p_{n,n}}- \frac{p_{n+1,n}}{p_{n+1,n+1}}\,.\end{equation}
Equating the coefficients of $z^{n-1}$ in the recurrence relation \eqref{recopuc} we find
\begin{equation}\label{reca1}
\frac{\phi_{n+1,n}}{\phi_{n+1,n+1}}=\frac{\phi_{n,n-1}}{\phi_{n,n}}
+\alpha_n\alpha_{n+1}\end{equation}
while the coefficient of $z$ yields,
\begin{equation}\label{reca2}
\frac{\phi_{n+1,1}}{\phi_{n+1,n+1}}=-\alpha_{n}-\alpha_{n+1}
\frac{\phi_{n,n-1}}{\phi_{n,n}}.
\end{equation}
Equating coefficients of $z^n+z^{-n}$ in \eqref{pn3} yields
$$
p_{n,n}=2^n\sqrt{2}\phi_{2n+1,2n+1}(1+\alpha_{2n+1})^{1/2},
$$
where we have used equation~\eqref{alp}. Computing $\frac{p_{n-1,n-1}^2}{p_{n,n}^2}$ then using \eqref{receq} gives $a^2_n$ for case 3. To compute $b_n$, find the coefficient of $z^{n-1}+z^{-n+1}$ in \eqref{pn3} and note that because of the symmetry $z\to1/z$ there is no contribution from the term multiplying $x^n$. Thus 
$$p_{n,n-1}=2^{n-1}\left(\frac{2}{1+\alpha_{2n+1}}\right)^{1/2}(\phi_{2n+1,2n}-\phi_{2n+1,1}+\phi_{2n+1,2n+1}-\phi_{2n+1,0})$$ 
so that  
$$\frac{p_{n,n-1}}{p_{n,n}}=\frac{1}{2(1+\alpha_{2n+1})}\left(\frac{\phi_{2n+1,2n}}{\phi_{2n+1,2n+1}}-\frac{\phi_{2n+1,1}}{\phi_{2n+1,2n+1}}+1-\frac{\phi_{2n+1,0}}{\phi_{2n+1,2n+1}}\right)=\frac{1}{2}\left(\alpha_{2n}+\frac{\phi_{2n,2n-1}}{\phi_{2n,2n}}\right)+\frac{1}{2}.
$$
Incrementing $n$ by one then subtracting and using once again \eqref{reca1} yields the result for case~3. The other cases follow in a similar manner.
\end{proof}

\section{Banach Algebras}\label{se4}

A \emph{Beurling weight}  \cite{simon}*{chapter 5} is a two-sided sequence $\nu=\{\nu(n)\}_{-\infty}^{\infty}$
with the properties
\begin{align}
\nu(0)& = 1,\quad \nu(n)\ge 1, \label{1.5}
\\
\nu(n) & =\nu(-n), \label{1.6} \\
\nu(n+m) & \le \nu(n) \nu(m).\label{1.7}
\end{align}
These properties imply the existence of the limit
\begin{equation}\label{1.8}
\lim_{n\to +\infty} \nu(n)^{1/ n}=\inf \nu(n)^{1/n}  =R\geq 1.
\end{equation}
If $R=1$, then $\nu$ is a \emph{strong} Beurling weight and if $\nu(n)\le\nu(n+1)$ for $n\ge0$ then $\nu$ is an \emph{increasing} Beurling weight.

Each Beurling weight $\nu$ has the associated Banach space
$\ell_\nu$ of two-sided sequences $f=\{f_n \}_{n=-\infty}^{+\infty}$
with
\begin{equation*}\label{1.9}
 \|f\|_\nu   \isdef \sum_{n=-\infty}^{\infty} \nu(n)|f_n|<\infty;
\end{equation*}
this norm extends naturally to any one-sided sequence by completing
the latter with zeros.

Banach algebras may be
associated with the Beurling weights \cite{ba} in the following manner.
Let $a,\ b\in \ell_\nu$; then their convolution is given by
$$
(a*b)(n)=\sum_{k=-\infty}^{\infty}a(k)b(n-k),
$$
which is absolutely convergent by \eqref{1.5} and by \eqref{1.7},
\begin{equation}\label{2.1}
\| a*b \|_\nu=\sum_n \nu(n)\sum_k |a(k)b(n-k)|\le \|a\|_\nu
\|b\|_\nu.
\end{equation}
Thus if we consider the space of functions
\begin{equation}\label{defOfA}
{\mathcal A}_\nu  \isdef  \left\{ f(z)=\sum_{k \in \Z} f_k z^{ k }:
\|f\|_\nu  = \sum_k \nu(k)|f_k|<\infty \right\}\,,
\end{equation}
equation \eqref{2.1} shows that it is closed under multiplication
and so forms an algebra. For $\nu \equiv 1$ we obtain the Wiener
algebra $\mathcal A_1$, containing ${\mathcal A}_\nu$ for any
Beurling weight $\nu$.

On the set of all two-sided sequences $\{d(n)\}_{n\in \Z}$ we define
the projectors $\PP_-$ and $\PP_+$:
$$
\left( \PP_- d\right)(n)=\begin{cases} d(n), & \text{if } n \leq 0,
\\ 0 , & \text{if } n > 0 \,,
\end{cases} \quad \text{and} \quad \left( \PP_+ d\right)(n)=\begin{cases} d(n), & \text{if } n \geq 0,
\\ 0 , & \text{if } n < 0 .
\end{cases}
$$

These projectors can be naturally extended to ${\mathcal A}_\nu$, and give rise
to two subalgebras associated with ${\mathcal A}_\nu$: ${\mathcal A}^+_\nu
\isdef \PP_+(\AA_\nu)$ and ${\mathcal A}^-_\nu \isdef  \PP_-(\AA_\nu)$.
In other words, $\AA_\nu^{\pm}$ are the sets of functions $f\in {\mathcal
A}_\nu$ whose Fourier coefficients $f_n$ vanish for $n<0$ or $n>0$,
respectively. It is easy to see because of \eqref{1.8} that if $f\in
{\mathcal A}_\nu$ then $f(z)$ is continuous for $1/R\le |z|\le R$ (which
is the maximal ideal space associated to ${\mathcal A}_\nu$ also called the Gelfand spectrum) and
analytic for $1/R< |z|< R$. Likewise if $f$ is in ${\mathcal A}^+_\nu$
then $f$ is continuous for $|z|\le R$ and analytic for $|z|<R$ (in
which case the series in \eqref{defOfA} is its Maclaurin expansion
convergent at least in this disk), and if $f$ is in ${\mathcal A}^-_\nu$
then $f$ is continuous for $|z|\ge 1/R$ and analytic for $|z|>1/R$. If $f\in{\mathcal A}_{\nu}$ and $f\ne0$ for $1/R\le|z|\le R$ then $1/f$ is also in the algebra \cites{kr,grs}.
For an increasing Beurling weight $\nu$ we define the increasing Beurling weight $\hat\nu$ as $\hat\nu(n)=(|n|+1)\nu(n)$.
We define $(a, b)\in \hat l_{\nu}$ if  $\sum^\infty_{n=1} n\nu(2n)(|1-4a^2_n|+|b_{n-1}|)<\infty$. 
We start with a simple lemma.
\begin{lem}\label{hatnu}
If $g(z)$ is analytic on the unit circle and $g'(z)\in  {\mathcal A}_{\nu}$, then $g(z)\in  {\mathcal A}_{\hat \nu}$.
\end{lem}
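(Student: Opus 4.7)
The plan is to use the Laurent expansion of $g$ in an annulus containing the unit circle, match coefficients with the expansion of $g'$, and then estimate $\|g\|_{\hat\nu}$ directly from $\|g'\|_\nu$ using submultiplicativity of $\nu$.

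Since $g$ is analytic on the unit circle, it is analytic in some annulus $r<|z|<R$ with $r<1<R$, hence has a Laurent expansion
\[
g(z)=\sum_{k\in\Z} g_k z^k,
\]
and term-by-term differentiation gives $g'(z)=\sum_{n\in\Z} (n+1)g_{n+1}\,z^n$. Comparing with the expansion $g'(z)=\sum_n c_n z^n$ coming from the hypothesis $g'\in\mathcal{A}_\nu$, the uniqueness of Laurent coefficients yields $c_n=(n+1)g_{n+1}$. In particular $c_{-1}=0$ automatically (this is forced by single-valuedness of $g$), and for every $k\neq 0$ we have $g_k=c_{k-1}/k$.

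Next I would bound the two pieces of $\|g\|_{\hat\nu}=|g_0|+\sum_{k\neq 0}(|k|+1)\nu(k)|g_k|$ separately. The coefficient $g_0$ is just $\frac{1}{2\pi}\int_0^{2\pi} g(e^{i\theta})\,d\theta$, which is finite since $g$ is continuous on the unit circle, so $\hat\nu(0)|g_0|=|g_0|$ poses no problem. For $k\neq 0$ I would use two elementary facts: first, $(|k|+1)/|k|\leq 2$; and second, the submultiplicativity \eqref{1.7} together with the symmetry \eqref{1.6} gives $\nu(k)\leq \nu(k-1)\nu(1)$ for every $k$. Substituting these bounds,
\[
\sum_{k\neq 0}(|k|+1)\nu(k)|g_k|
=\sum_{k\neq 0}\frac{|k|+1}{|k|}\,\nu(k)\,|c_{k-1}|
\leq 2\nu(1)\sum_{k\neq 0}\nu(k-1)|c_{k-1}|.
\]
After the change of index $m=k-1$, the right-hand side becomes $2\nu(1)\sum_{m\neq -1}\nu(m)|c_m|\leq 2\nu(1)\|g'\|_\nu$, and the whole estimate reads
\[
\|g\|_{\hat\nu}\leq |g_0|+2\nu(1)\|g'\|_\nu<\infty,
\]
which is exactly the statement $g\in\mathcal{A}_{\hat\nu}$.

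There is no real obstacle in this proof; the only subtle point is the observation that $c_{-1}=0$ is forced (otherwise $g$ would have a logarithmic branch, contradicting analyticity on the circle), so the division by $k$ in the formula $g_k=c_{k-1}/k$ creates no exceptional term. Everything else is a two-line application of the Beurling weight axioms \eqref{1.5}--\eqref{1.7}.
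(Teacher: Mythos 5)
Your proof is correct and follows essentially the same route as the paper's: both write the Laurent coefficients of $g$ in terms of those of $g'$ and then use the two elementary bounds $|k|+1\le 2|k|$ for $k\neq 0$ and $\nu(k)\le\nu(k-1)\nu(1)$ to dominate $\|g\|_{\hat\nu}$ by a multiple of $\|g'\|_\nu$ (plus the harmless $g_0$ term). The only difference is that you spell out the reindexing and the vanishing of $c_{-1}$ explicitly, which the paper leaves implicit.
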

\begin{proof}
If we write $g(z)=\sum_{m\in\Z}g_mz^m$, then $g'(z)=\sum_{m\in\Z}mg_mz^{m-1}\in {\mathcal A}_{\nu}$ implies that \\
$\sum_{m\in \Z}|g_m||m|\nu(m-1)<\infty.$ Since $\nu(m)\le\nu(m-1)\nu(1)$ and $|m|+1\leq 2|m|$ for $m\neq 0$ we see that 
$||g||_{\hat \nu}<\infty$, completing the proof.
\end{proof}

In the sequel we will make use of the following lemma of Krein \cite{kr} (see also Lemma 1 in \cite{ger3}).
\begin{lem}\label{kre}
Suppose that $g\in{\mathcal A}_{\nu}^+$ where $\nu$ is an increasing Beurling weight with $\lim_{n\to\infty}\nu(n)^{\frac{1}{n}}=R$, $g(z_0)=0$ and $|z_0|<R$. Then $\frac{g(z)}{z-z_0}\in {\mathcal A}_{\nu}^+$.
\end{lem}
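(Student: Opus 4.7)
The plan is to split on whether $z_0=0$. In the trivial case $z_0=0$, the hypothesis $g(z_0)=0$ forces $g_0=0$, so $g(z)=\sum_{k\geq 1}g_k z^k$ and $g(z)/z=\sum_{k\geq 0}g_{k+1}z^k$; using that $\nu$ is increasing gives $\|g/z\|_\nu=\sum_{k\geq 0}\nu(k)|g_{k+1}|\leq \sum_{k\geq 0}\nu(k+1)|g_{k+1}|\leq \|g\|_\nu<\infty$, which finishes this case.

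For $z_0\neq 0$, since $g(z_0)=0$ the point $z_0$ is a removable singularity of $h(z):=g(z)/(z-z_0)$, so $h$ is analytic in $\{|z|<R\}$. Writing $h(z)=\sum_{k\geq 0}h_k z^k$, I would first derive the explicit formula
\[
h_k=\sum_{j\geq 0}z_0^j\, g_{k+j+1}
\]
by combining the coefficient recurrence $h_{k-1}=g_k+z_0 h_k$ coming from $(z-z_0)h=g$ with the decay $|h_k|=O(\rho^{-k})$ for any $|z_0|<\rho<R$, which follows from analyticity of $h$ in $|z|<R$. Applying the triangle inequality and exchanging the order of summation,
\[
\|h\|_\nu\;\leq\; \sum_{l\geq 1}|g_l|\,T_l, \qquad T_l\;:=\;\sum_{k=0}^{l-1}\nu(k)\,|z_0|^{l-1-k},
\]
so the whole proof reduces to establishing the uniform bound $T_l\leq C\,\nu(l)$ for some constant $C=C(\nu,z_0)$ independent of $l$; given this, $\|h\|_\nu\leq C\|g\|_\nu<\infty$ and $h\in\mathcal{A}_\nu^+$.

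To prove this estimate I would exploit the recurrence $T_{l+1}=|z_0|T_l+\nu(l)$ (immediate from the definition of $T_l$), which rewritten for $u_l:=T_l/\nu(l)$ reads $u_{l+1}=(\nu(l)/\nu(l+1))(|z_0|u_l+1)$. Finiteness of each individual $u_l$ is immediate since the sum is finite. To control the asymptotic behavior I rewrite
\[
u_l \;=\; \sum_{j=0}^{l-1}\frac{\nu(l-1-j)}{\nu(l)}\,|z_0|^j
\]
and use the two-sided control of the ratios $\nu(l-1-j)/\nu(l)$ coming from submultiplicativity ($\nu(l)\leq \nu(l-1-j)\nu(j+1)$ gives the lower bound $1/\nu(j+1)$) and monotonicity (upper bound $1$). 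Combined with $\nu(n)^{1/n}\to R>|z_0|$, which forces $\nu(l-1-j)/\nu(l)$ to behave like $R^{-1-j}$ for fixed $j$ as $l\to\infty$, this should yield $\limsup_l u_l\leq 1/(R-|z_0|)$; together with finiteness of each $u_l$ this gives $\sup_l u_l<\infty$.

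The main obstacle will be that, for a general increasing Beurling weight, $\nu(l+1)/\nu(l)$ need not converge to $R$ (for instance, $\nu$ may contain long `plateaus' where consecutive ratios equal $1$), so the one-step recurrence for $u_l$ fails to contract whenever $|z_0|\geq 1$. I plan to overcome this by invoking the multi-step asymptotics $\nu(l+k)/\nu(l)\to R^k$ for each fixed $k$, a consequence of $\nu(n)^{1/n}\to R$ together with submultiplicativity, and by splitting the sum for $u_l$ into a dominant range of small $j$, where the comparison sequence $R^{-1-j}|z_0|^j$ is summable (because $|z_0|<R$), and a tail of large $j$, where $\nu(l-1-j)/\nu(l)\leq 1$ and the decay of $|z_0|^{l-1-j}$ relative to $\nu(l)\geq R^l$ takes over.
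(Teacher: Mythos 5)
The paper does not actually prove this lemma: it is quoted from Krein \cite{kr} and from Lemma~1 of \cite{ger3}, so there is no internal argument to measure yours against, and your proposal has to stand on its own. Its first half does: the coefficient identity $h_k=\sum_{j\ge 0}z_0^j g_{k+j+1}$ is correctly justified via the recurrence plus the decay of $h_k$ coming from analyticity, and the problem is correctly reduced to the estimate $T_l\le C\nu(l)$. Note that this reduction already disposes of the entire range $|z_0|<1$, not just $z_0=0$, since monotonicity alone gives $T_l\le \nu(l)\sum_{j\ge 0}|z_0|^j=\nu(l)/(1-|z_0|)$.

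The gap is in the range $1\le |z_0|<R$, and it is not merely technical. The fact you invoke to close it, namely that submultiplicativity together with $\nu(n)^{1/n}\to R$ forces $\nu(l+k)/\nu(l)\to R^k$, is false: the increasing Beurling weight $\nu(n)=R^{2\lceil |n|/2\rceil}$ satisfies $\nu(n)^{1/n}\to R$ while $\nu(l+1)/\nu(l)$ oscillates between $1$ and $R^2$. Worse, the target estimate $\sup_l T_l/\nu(l)<\infty$ itself fails for general increasing Beurling weights once $|z_0|\ge 1$. Writing $\nu(n)=R^{|n|}e^{\mu(|n|)}$ with $\mu\ge 0$ subadditive, $\mu(n)=o(n)$ and $\mu(n)-\mu(n+1)\le \log R$ (this parametrizes all such weights, since $\inf\nu(n)^{1/n}=R$ forces $\nu(n)\ge R^n$), one computes $T_l/\nu(l)=R^{-1}\sum_{j=0}^{l-1}(|z_0|/R)^j e^{\mu(l-1-j)-\mu(l)}$; choosing $\mu(n)=c\sqrt{n}-e(n)$ with $e$ increasing and $e(n)\le \tfrac{c}{2}\sqrt{n}$ (which preserves subadditivity) and letting $e$ climb with slope $\log(R/|z_0|)\le\log R$ on stretches of length $J_i\to\infty$ placed at $n_i\gg J_i^2$ produces descents $\mu(l_i-1-j)-\mu(l_i)\ge j\log(R/|z_0|)-O(1)$ for $j\le J_i$, whence $T_{l_i}/\nu(l_i)\ge c' J_i\to\infty$. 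Since $g_l=z^l-z_0^l$ realizes $T_l$ with no cancellation and $\|g_l\|_\nu\le 2\nu(l)$, the division operator is then unbounded on the closed ideal $\{g:g(z_0)=0\}$, and the closed graph theorem yields a $g$ for which the conclusion fails. So no splitting of the sum will rescue your argument at this level of generality: to finish you must either restrict to $|z_0|<1$, assume more about $\nu$ (the bound $T_l\le C\nu(l)$ is immediate for the weights $r^{|n|}$ and $(|n|+1)r^{|n|}$), or strengthen the hypothesis on $g$ (for instance $g\in\mathcal{A}_{\hat\nu}^+$ with $\hat\nu(n)=(|n|+1)\nu(n)$ handles $|z_0|=1$). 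As written, the key estimate your proof needs is unavailable, and this appears to be a genuine issue with the lemma in the stated generality rather than only with your proof of it.
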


A useful Lemma proved in \cite{gm}*{Lemma 2} is
\begin{lem}\label{lemgm}
Let $h \in {\mathcal A}_1$, and let $\nu$ be an increasing Beurling weight. Assume that $\PP_-(h) \in {\mathcal A}_\nu^-$. Then for
a function $g$
$$
g\in \AA_1^+ \text{ or } g \in \AA_\nu^- \quad \Rightarrow \quad
\PP_-(g h) \in \AA_\nu^-\,.
$$
\end{lem}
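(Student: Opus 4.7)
The plan is to decompose $h$ via both projectors: since $\PP_+$ and $\PP_-$ both include the zero index, write $h = \PP_-(h) + \PP_+(h) - h_0$ with $h_0 = h(0)$. Note that $\PP_+(h) \in \AA_1^+$ since $\|\PP_+(h)\|_1 \le \|h\|_1 < \infty$. Linearity of $\PP_-$ then gives
\begin{equation*}
\PP_-(gh) = \PP_-\bigl(g\,\PP_-(h)\bigr) + \PP_-\bigl(g\,\PP_+(h)\bigr) - h_0\,\PP_-(g),
\end{equation*}
so the task reduces to showing each of these three summands lies in $\AA_\nu^-$ under either hypothesis on $g$.

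Two of the three summands are immediate in each case. If $g \in \AA_\nu^-$, then $g\,\PP_-(h) \in \AA_\nu^-$ (product in the subalgebra) and $\PP_-(g) = g \in \AA_\nu^-$. If instead $g \in \AA_1^+$, then $g\,\PP_+(h) \in \AA_1^+$, whose $\PP_-$ is just its constant term, and $\PP_-(g) = g(0)$ is a scalar, which trivially belongs to $\AA_\nu^-$. Thus in the first case only $\PP_-(g\,\PP_+(h))$ needs work, and in the second only $\PP_-(g\,\PP_-(h))$ does.

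Both remaining terms reduce to the same core estimate: for $F \in \AA_1^+$ and $G \in \AA_\nu^-$, one has $\PP_-(FG) \in \AA_\nu^-$ with $\|\PP_-(FG)\|_\nu \le \|F\|_1 \|G\|_\nu$. For $n \le 0$ the $n$-th Fourier coefficient of $FG$ is $\sum_{k \ge 0} F_k G_{n-k}$; setting $m = -n$, using $\nu(-m) = \nu(m)$, and then invoking the increasing property $\nu(m) \le \nu(m+k)$ for $m,k \ge 0$, one gets
\begin{equation*}
\|\PP_-(FG)\|_\nu \le \sum_{m \ge 0}\sum_{k \ge 0} \nu(m+k)\,|F_k|\,|G_{-m-k}| = \sum_{k \ge 0} |F_k| \sum_{j \ge k} \nu(j)\,|G_{-j}| \le \|F\|_1\,\|G\|_\nu,
\end{equation*}
after the reindexing $j = m+k$ in the inner sum.

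The only substantive step is this last estimate, and it is exactly where the \emph{increasing} hypothesis on $\nu$ enters: without monotonicity one cannot dominate $\nu(m)$ by $\nu(m+k)$, so the convolution of a $\nu$-weighted sequence with an unweighted one would not in general remain $\nu$-summable after the negative projection. Everything else is linearity of $\PP_-$ and the subalgebra properties of $\AA_\nu^-$ and $\AA_1^+$.
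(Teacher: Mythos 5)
Your proof is correct. Note that the paper itself does not prove this lemma --- it is quoted from Lemma~2 of the cited reference of Geronimo and Mart\'inez-Finkelshtein --- so there is no in-paper argument to compare against; your decomposition $h=\PP_-(h)+\PP_+(h)-h_0$ together with the convolution estimate $\|\PP_-(FG)\|_\nu\le\|F\|_1\|G\|_\nu$ for $F\in\AA_1^+$, $G\in\AA_\nu^-$ (using $\nu(-m)=\nu(m)\le\nu(m+k)$) is exactly the standard argument behind that result, and you correctly identify monotonicity of $\nu$ as the one essential hypothesis.
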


Using the above notations, we start with the following theorem proved in \cite{ger3}.
\begin{thm}\label{lnu} Let  $\nu$ be an increasing Beurling weight with $\lim_{n\to\infty}\nu(n)^{\frac{1}{n}}=R$.
If $(a,b)\in \hat l_{\nu}$ then
\begin{equation}\label{eq3}
\hat f_+(z)= 2zf_+(z)=\lim_{n\to \infty} \hat\psi_n (z)\in {\mathcal  A}^+_{\nu}.
\end{equation}
Moreover, there are two solutions  
$\Phi^{+}(z,n)=\begin{pmatrix} p_{+}(z,n)\\ \psi_{+}(z,n)\end{pmatrix}$ and 
$\Phi^{-}(z,n)=\begin{pmatrix} p_{-}(z,n)\\\psi_{-}(z,n)\end{pmatrix}$ 
of \eqref{sys} that satisfy
\begin{equation}\label{eqplus}
 \lim_{n\to\infty}|z^{-n}p_+(z,n)-1|=0=\lim_{n\to\infty}|z^{-n}\psi_+(z,n)|\text{ for } |z|\le1,
\end{equation}
and
\begin{equation}\label{eqminus}
 \lim_{n\to\infty}|z^{n}p_-(z,n)-1|=0=\lim_{n\to\infty}|z^{n}\psi_-(z,n)-(1-z^2)|\text{ for } |z|\ge1,
\end{equation}
with $z^{-n}\Phi^{+}(z,n) \in A^+_{\nu}$ and $z^{n}\Phi^{-}(z,n) \in A^-_{\nu}$. If $f_+$ has zeros inside the unit circle they must be real, simple, and finite in number. If $f_+$ has zeros on the unit circle they must be simple at $z=1$ or $-1$ or both and $\hat f_+/h(z)\in {\mathcal A}^+_{\nu}$ where
\begin{equation}\label{hz}
h(z)=\left\{\begin{matrix}1-z& \text{if}\ \hat f_+(1)=0\\ 1+z& \text{if}\ \hat f_+(-1)=0\\
1-z^2& \text{if}\ \hat f_+(1)=0=\hat f_+(-1).\end{matrix}\right.
\end{equation}
Furthermore in this case
\begin{equation}\label{derivative}
(1-z^2)\frac{d}{dz}(\hat f_+)\in {\mathcal A}^+_{\nu} 
\end{equation}
and 
\begin{equation}\label{polyf}
\Phi_n(x)=\frac{1}{z-1/z} \left[z\hat  f_+(1/z)\Phi_+(z,n)-\frac{1}{z}\hat f_+(z)\Phi_-(z,n)\right],\ \frac{1}{R}\le |z|\le R.
\end{equation}
\end{thm}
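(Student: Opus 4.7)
My plan is to follow the Baxter-type strategy from \cite{ger3}, rewriting the recurrence in terms of the modified variables $\hat p_n(z)=z^n p_n(x)$ and $\hat\psi_n(z)=z^n\psi_n(z)$ (so that $\hat p_n,\hat\psi_n$ are genuine polynomials in $z$ of degree at most $2n$), and exploiting that the perturbed transfer matrix tends to a constant matrix $\hat T_\infty(z)=\bigl(\begin{smallmatrix}z^2&1\\0&1\end{smallmatrix}\bigr)$ with a perturbation whose coefficients are controlled by $|1-4a_n^2|+|b_{n-1}|$. The summability hypothesis $(a,b)\in\hat l_\nu$ is tailor-made so that, after the convolution estimate \eqref{2.1} and the bound $\|z^k\|_\nu=\nu(k)$, these perturbations (which produce polynomials of degree $\le 2n$) form an $\ell^1$-sequence in $\mathcal A_\nu^+$.

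First, I would establish uniform $\mathcal A_\nu^+$-bounds on $\hat p_n$ and $\hat\psi_n$. Using the second line of \eqref{rec2} with $a=1/2$, $b=0$, one obtains
\begin{equation*}
\hat\psi_n-\hat\psi_{n-1}=\frac{1-2a_n}{2a_n}\,\hat\psi_{n-1}+\frac{z\bigl((1-4a_n^2)z-2b_{n-1}\bigr)}{2a_n}\,\hat p_{n-1},
\end{equation*}
so that $\|\hat\psi_n-\hat\psi_{n-1}\|_\nu\le C\,\nu(2n)\bigl(|1-4a_n^2|+|b_{n-1}|\bigr)(\|\hat\psi_{n-1}\|_\nu+\|\hat p_{n-1}\|_\nu)$; a Gronwall argument then produces uniform norm bounds and convergence of $\hat\psi_n$ to some $\hat f_+\in\mathcal A_\nu^+$. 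The Jost solutions $\Phi^{\pm}$ are obtained by solving the system \eqref{sys} backwards from the prescribed asymptotics \eqref{eqplus}--\eqref{eqminus}; this is a standard contraction-mapping argument in $\mathcal A_\nu^+$ (respectively $\mathcal A_\nu^-$), where the displayed perturbation bound again guarantees convergence of the iterative series. The factor $n$ in the weight $n\nu(2n)$ of $\hat l_\nu$ is what is needed to control the derivative statement later.

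For the properties of the zeros of $f_+$, I would exploit the Wronskian/Christoffel--Darboux calculation already carried out in the proof of Theorem~\ref{bpoly} (using \eqref{rec3} and \eqref{CD}), which is independent of the polynomial nature of $\hat\psi_{n_0}$ and hence applies in the limit to $\hat f_+$: the imaginary-part identity forces zeros of $\hat f_+$ with $|z|<1$ to be real, and the differentiated version forces them to be simple. Analyticity of $\hat f_+$ in $|z|<R$ then limits the zeros to a finite set, and on the unit circle only $z=\pm 1$ can be a zero (simple), by the interlacing argument of Theorem~\ref{bpoly}. The divisibility $\hat f_+/h\in\mathcal A_\nu^+$ is an immediate application of Krein's Lemma~\ref{kre}, since the potential zeros $\pm 1$ lie in the disc $|z|<R$.

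For \eqref{derivative} I would apply Lemma~\ref{hatnu} in reverse: the summability condition $\sum n\nu(2n)(|1-4a_n^2|+|b_{n-1}|)<\infty$ means that the one-step increments $\hat\psi_n-\hat\psi_{n-1}$ are summable against the heavier weight $\hat\nu$ as well (after accounting for the factor $1-z^2$ which annihilates the possible unit-circle zeros), giving $(1-z^2)\frac{d}{dz}\hat f_+\in\mathcal A_\nu^+$. Finally, the representation \eqref{polyf} follows by writing $\Phi_n$ as a linear combination of $\Phi^\pm(z,n)$, with coefficients determined by the (constant in $n$) Wronskian \eqref{wr} evaluated at $n=0$; the asymptotic normalizations \eqref{eqplus}--\eqref{eqminus} force these coefficients to be $\pm\hat f_+(1/z)/(z-1/z)$ and $\pm\hat f_+(z)/(z-1/z)$ (up to powers of $z$). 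The main obstacle is the Jost-solution construction in Step 2: one needs the contraction argument to produce $z^{\mp n}\Phi^\pm(z,n)$ in $\mathcal A_\nu^\pm$ uniformly in $n$ and on the full annulus $1/R\le|z|\le R$, and this is precisely where the weighted summability in $\hat l_\nu$ is used in its strongest form together with Lemma~\ref{lemgm} to keep the iterates inside the correct one-sided subalgebra.
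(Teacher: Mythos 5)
Your overall plan---Volterra/Gronwall estimates for $\hat\psi_n$ and $z^np_n$, backward iteration for the Jost solutions, the Christoffel--Darboux identities for the reality/simplicity of the zeros, Krein's Lemma~\ref{kre} for the division by $h$, and the constancy of the Wronskian for \eqref{polyf}---is exactly the strategy of \cite{ger3}, which is what the paper invokes for everything except \eqref{derivative}: its written proof consists of the citation plus a new, detailed argument for \eqref{derivative} alone. Two remarks. First, a point of precision: Gronwall does not give \emph{uniform} $\mathcal A_\nu$-bounds on $\hat p_n=z^np_n$. Already in the free case $z^nU_n(x)=\frac{1-z^{2n+2}}{1-z^2}$ one has $\|z^np_n\|_\nu\sim (n+1)\nu(2n+2)$, and it is precisely this growth that the factor $n$ in the definition of $\hat l_\nu$ is designed to absorb; so that factor is already needed for the basic convergence $\hat\psi_n\to\hat f_+$ in $\mathcal A_\nu^+$, not only ``to control the derivative statement later.'' (Your displayed increment inequality also carries a spurious $\nu(2n)$: the polynomial coefficient multiplying $\hat p_{n-1}$ has degree at most $2$ and contributes only $O(\nu(2))$; the $\nu(2n)$-type growth enters through $\|\hat p_{n-1}\|_\nu$ itself.)

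The genuine gap is in your step for \eqref{derivative}. You propose to show that the increments $\hat\psi_n-\hat\psi_{n-1}$ are summable in the heavier weight $\hat\nu(n)=(|n|+1)\nu(n)$ and then ``reverse'' Lemma~\ref{hatnu}. But $\|z^np_{n-1}\|_{\hat\nu}$ is generically of order $n^2\nu(2n)$ (e.g.\ for $\nu\equiv 1$ or polynomial weights), so summability in $\mathcal A_{\hat\nu}$ would require $\sum n^2\nu(2n)(|1-4a_n^2|+|b_{n-1}|)<\infty$, one factor of $n$ more than the hypothesis $(a,b)\in\hat l_\nu$ provides. Moreover, the role of the factor $1-z^2$ is not to annihilate the possible zeros of $\hat f_+$ at $\pm1$; it is that $(1-z^2)\frac{d}{dz}$ applied to the Chebyshev kernel $\frac{1-z^{2m}}{1-z^2}$ yields $-2mz^{2m-1}+2z\frac{1-z^{2m}}{1-z^2}$, whose $\mathcal A_\nu$-norm is $O(m\nu(2m))$ rather than the $O(m^2\nu(2m))$ one would get from $\frac{d}{dz}$ alone. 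The paper exploits exactly this: it introduces $k_n(z)=(1-z^2)\frac{d}{dz}\frac{z^np_n(x)}{\beta_n}$, differentiates the Volterra representation \eqref{poly}, and runs a second discrete Gronwall iteration to obtain $\|k_n\|_\nu\le\hat c(n+1)\nu(2n+2)$---the same order as $\|\frac{z^np_n}{\beta_n}\|_\nu$---before differentiating \eqref{hatf}. Without this companion estimate your argument for \eqref{derivative} does not close. (The conclusion $\hat f_+\in\mathcal A_{\hat\nu}^+$ is in fact true, but in the paper it is deduced \emph{a posteriori} from \eqref{derivative} via Krein's lemma and Lemma~\ref{hatnu}, not the other way around.)
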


\begin{remark}
Most of the statements and formulas in \cite{ger3} use the function $f_+$, while we will work mainly with the function $\hat  f_+$ related to $f_+$ by \eqref{eq3}.
\end{remark}

\begin{proof}
Most of the above was given in \cite{ger3}*{Theorem 1} and we will give a new simple proof of \eqref{derivative}. From equation~\eqref{onehalf} and equation~\eqref{eq3} it follows that
\begin{equation}\label{hatf}
\frac{\hat f_+(z)}{\beta}=1+\sum_{i=0}^{\infty}[(1-4 a_{i+1}^2)z^2-2b_i z]\frac{z^i p_i(x)}{\beta_i},
\end{equation}
where $\beta_0=1$, $\beta_n=\prod_{i=1}^n\frac{1}{2 a_i}$ and  $\beta=\prod_{i=1}^{\infty}\frac{1}{2 a_i}$.
The polynomials $p_i$ satisfy \cites{nevai, gc}
\begin{equation}\label{poly}
\frac{z^n p_n(x)}{\beta_n}=\frac{1-z^{2n+2}}{1-z^2}+\sum_{i=0}^{n-1}\left\{(1-4a_{i+1}^2)z^2(\frac{1-z^{2n-2i-2}}{1-z^2})-2b_i z(\frac{1-z^{2n-2i}}{1-z^2})\right\}\frac{z^i p_i(x)}{\beta_i}.
\end{equation}
Since $||\frac{1-z^{2n+2}}{1-z^2}||_{\nu}<(n+1)\nu(2n+2)$, the discrete Gronwall's inequality gives
$$
\left\| \frac{z^n p_n(x)}{\beta_n}\right\|_{\nu}\le(n+1)\nu(2n+2)\exp\left(\sum_{i=1}^ni\nu(2i)(|1-4a_i^2|+2|b_{i-1}|)\right).
$$
If equation~\eqref{poly} is differentiated with respect to $z$ then multiplied by $1-z^2$ we obtain with $k_n(z)=(1-z^2)\frac{d}{dz}\frac{z^n p_n(x)}{\beta_n}$,
\begin{align*}
k_n(z)=&-(2n+2)z^{2n+1}+2z\frac{1-z^{2n+2}}{1-z^2}\\&+\sum_{i=0}^{n-1}\left\{(1-4a_{i+1}^2)z^2(\frac{1-z^{2n-2i-2}}{1-z^2})-2b_i z(\frac{1-z^{2n-2i}}{1-z^2})\right\}'(1-z^2)\frac{z^i p_i(x)}{\beta_i}\\&+\sum_{i=0}^{n-1}\left\{(1-4a_{i+1}^2)z^2(\frac{1-z^{2n-2i-2}}{1-z^2})-2b_i z(\frac{1-z^{2n-2i}}{1-z^2})\right\}k_i(z).
\end{align*}
With the above bound on $||\frac{z^n p_n(x)}{\beta_n}||_{\nu}$ and using the fact that $\nu$ is increasing we see that the sum of the first two terms in the above equation is bounded above by $c(n+1)\nu(2n+2)$ so another application of the discrete Gronwall's inequality gives $||k_n(z)||_{\nu}\le \hat c (n+1)\nu(2n+2)$. Differentiating equation~\eqref{hatf} with respect to $z$ then multiplying by $1-z^2$ yields
\begin{align*}
\frac{1-z^2}{\beta}\frac{d}{dz}\hat f_+&=\sum_{i=0}^{\infty}[2(1-4 a_{i+1}^2)z-2b_i](1-z^2)\frac{z^i p_i(x)}{\beta_i}\\&+\sum_{i=0}^{\infty}[(1-4 a_{i+1}^2)z^2-2b_i z]k_i(z).
\end{align*}
Taking the norm in ${\mathcal A}_{\nu}$ and using the above bounds gives the result.
\end{proof}
It is not difficult to see from the recurrence formulas and boundary conditions that $p_-(z,n)=p_+(\bar z,n)=p_+(1/z,n)$ for $|z|=1$ and these relations hold in regions of overlapping analyticity.

\begin{defn}\label{mclass} 
Let $r>1$ and let $f(z)$ be a real analytic function in $\{z:|z|<r\}$ which has only simple real zeros in the closed unit disk $\{z:|z|\leq 1\}$ and $f(0)> 0$. If $\{z_j\}_{j=1}^{M}$ are the zeros of $f$ in the open unit disk $\{z:|z|< 1\}$, we define a positive Borel measure $\rho=\rho_f$ on $\R$ by
\begin{subequations}\label{genm}
\begin{equation}\label{genmf}
d\rho(x)=\sigma(x)\chi_{(-1,1)}(x)dx+\sum_{j=1}^{M}\rho_j\delta (x-x_j)dx,
\end{equation}
where $x_j=\frac{1}{2}(z_j+\frac{1}{z_j})$,
\begin{equation}\label{genmt}
\sigma(x)=\frac{2 \sin \theta}{\pi |f(z)|^2}, \text{ with }x=\cos\theta, \quad z=e^{i\theta},
\end{equation}
\end{subequations}
and $\rho_j> 0$ are arbitrary. We will denote by $\cM$ the class of all Borel measures defined as above for some $f$ and  $\{\rho_j\}_j$ and for each such $f$ we define 
\begin{equation}\label{sfunction}
{\mathcal S}(z)=\frac{f(1/z)}{f(z)}.
\end{equation}
\end{defn}
It is easy to see that for every measure $\rho\in\cM$ as in \eqref{genm} we can find $R>1$ such that the following conditions hold:
\begin{itemize}
\item[{(i)}] $f(z)$ analytic for $|z|<R$ and continuous for $|z|\leq R$;
\item[{(ii)}] For every zero $z_j$ of $f$ with $1/R\leq |z_j|<1$ we have 
\begin{equation}\label{rhoj}
\rho_j=\frac{(z_j-1/z_j)^2}{z_j f'(z_j)f(1/z_j)}.
\end{equation}
\end{itemize}
Indeed, if we take $R>1$ sufficiently close to $1$, all zeros of $f$ in the
unit disk will be inside the circle $|z|<1/R$ and \eqref{rhoj} will be
automatically satisfied. In practice, we will be interested in the largest
possible $R>1$ for which (i) and (ii) above hold. We denote by $\cMR$ the
subclass of measures in $\cM$ which satisfy the additional conditions
(i)-(ii) and we call the masses in (ii) positive canonical weights
\cites{ds,simon1}. 
Thus, $\cM=\cup_{R>1}\cM_R$ and clearly, $\cM_{R_1}\supset \cM_{R_2}$ for $R_1<R_2$.

\begin{remark}\label{not}
For $\rho\in\cM$ we will write $\rho=\rho_f$ to indicate that $f$ is the function in Definition~\ref{mclass} (which determines the absolutely continuous part of $\rho$ and the location of the mass points). Moreover, if $\rho=\rho_f\in \cMR$ then the canonical weights will given by \eqref{rhoj}.
\end{remark}

\begin{remark}\label{even}
From equation~\eqref{genmt} it follows that $\frac{\sigma(x)}{\sin\theta}$ is an even function of $\theta$.
\end{remark}

\begin{remark}\label{bsremark}
Note that the Bernstein-Szeg\H{o} measures discussed in Theorem~\ref{bpoly} can be characterized as the measures $\rho_f\in\cM$ satisfying the following two conditions:
\begin{itemize}
\item $f(z)$ is a polynomial;
\item $\rho_f$ has positive canonical weights at all zeros of $f$ inside the unit circle (or, equivalently, $\rho_f\in\cMR$ for all $R>1$).
\end{itemize}
\end{remark}

\begin{remark}\label{remcv}
It is perhaps useful to stress that if we start with an arbitrary function $f(z)$ satisfying the conditions in Definition~\ref{mclass}, then the $\rho_j$'s computed from \eqref{rhoj} are not necessarily positive. Indeed, if we write $f$ as
\begin{equation}\label{cvf}
f(z)=\prod_{j=1}^{M}(z-z_j)g(z),
\end{equation}
where $\{z_j\}$ are all the zeros of $f$ inside the unit circle, then the sign of $\rho_j$ coincides with the sign of $z_j^{M+1}\prod_{k\neq j}(z_j-z_k)g(z_j)g(1/z_j)$. 
\end{remark}

\begin{lem}\label{lemmod}
Let $f(z)$ be a function satisfying the conditions in Definition~\ref{mclass}. Then there exists a polynomial $\tilde g(z)$ with real roots of modulus greater than $1$, such that the canonical weights ${\tilde\rho}_j$ for the function $f\tilde{g}$ are all positive. Moreover, $\tilde{g}$ can be chosen of degree $2s$ or less, where $s$ is the number of the negative $\rho_j$'s for $f$.
\end{lem}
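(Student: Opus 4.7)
My plan is to reduce the problem to sign-tracking. If $\tilde g$ is a polynomial whose roots all lie outside the closed unit disk, then $f\tilde g$ has the same zeros as $f$ in the closed unit disk, and since $f(z_j)=0$ the defining formula \eqref{rhoj} gives
\begin{equation*}
\tilde\rho_j = \frac{(z_j - 1/z_j)^2}{z_j (f\tilde g)'(z_j)(f\tilde g)(1/z_j)} = \frac{\rho_j}{\tilde g(z_j)\tilde g(1/z_j)}.
\end{equation*}
Hence it suffices to construct a real polynomial $\tilde g$ of degree at most $2s$, with real roots of modulus greater than $1$, such that $\tilde g(z_j)\tilde g(1/z_j)$ carries the same sign as $\rho_j$ for each $j=1,\dots,M$.

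The main step is a single-flip construction: for a fixed index $j_0$ I would build a quadratic $\tilde g^{(j_0)}(z) = (z - w_1)(z - w_2)$, with real $w_1, w_2$ of modulus greater than $1$, satisfying $\tilde g^{(j_0)}(z_{j_0})\tilde g^{(j_0)}(1/z_{j_0}) < 0$ while $\tilde g^{(j_0)}(z_j)\tilde g^{(j_0)}(1/z_j) > 0$ for every $j\neq j_0$. I would take $w_1 = 1/z_{j_0} + \epsilon$ and $w_2 = 1/z_{j_0} - \epsilon$ for a small $\epsilon>0$. The verification rests on the elementary observation that the factor $(z_j - w_k)(1/z_j - w_k)$ is negative exactly when $w_k$ lies strictly between $z_j$ and $1/z_j$. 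For $j=j_0$ one of $w_1, w_2$ lands in the open interval with endpoints $z_{j_0}, 1/z_{j_0}$ and the other does not, so exactly one factor is negative. For $j\neq j_0$ the point $1/z_{j_0}$ is strictly inside or strictly outside the open interval with endpoints $z_j, 1/z_j$, since $|1/z_{j_0}|>1>|z_j|$ and the $z_j$'s are distinct imply $1/z_{j_0}\neq z_j$ and $1/z_{j_0}\neq 1/z_j$; taking $\epsilon$ smaller than $|1/z_{j_0}|-1$ and smaller than the finitely many positive distances from $1/z_{j_0}$ to the $z_j$'s and $1/z_j$'s with $j\neq j_0$ ensures that both $w_k$ lie on the same side of that interval and have modulus greater than $1$, so both factors carry the same sign.

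The lemma would then follow by induction on $s$. The case $s=0$ is trivial with $\tilde g=1$. For the inductive step, pick any $j_0$ with $\rho_{j_0}<0$ and apply the single-flip construction; the transformation rule for $\tilde\rho_j$ flips the sign only at $z_{j_0}$ and preserves all other signs, so $f\tilde g^{(j_0)}$ has exactly $s-1$ negative canonical weights. The inductive hypothesis applied to $f\tilde g^{(j_0)}$ produces a polynomial of degree at most $2(s-1)$; multiplying by $\tilde g^{(j_0)}$ yields the required $\tilde g$ of degree at most $2s$. The only delicate point, and the main thing to check carefully, is that the single-flip quadratic really does not perturb any of the other signs, which is exactly the purpose of the quantitative smallness of $\epsilon$ above; this is where the finiteness and geometric separation of the mass points $\{z_j\}$ enter in an essential way.
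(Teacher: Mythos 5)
Your proposal is correct and follows essentially the same route as the paper: both reduce to the transformation rule $\tilde\rho_j=\rho_j/\bigl(\tilde g(z_j)\tilde g(1/z_j)\bigr)$ and then induct on the number of negative weights, flipping one sign at a time with a quadratic $(z-\gamma_1)(z-\gamma_2)$ whose two real roots straddle $1/z_{j_0}$ closely enough to leave all other signs unchanged. Your write-up merely makes explicit the quantitative choice of $\epsilon$ and the sign criterion for $(z_j-w)(1/z_j-w)$ that the paper leaves as ``it is easy to see.''
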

\begin{proof}
If we write $f$ as in \eqref{cvf} and using the notations in Remark~\ref{remcv}, it is enough to show that, for every fixed $j$, we can find a polynomial $\tilde{g}_j(z)$ with real roots of modulus greater than $1$, such that $\tilde{g}_j(z_j)\tilde{g}_j(1/z_j)<0$ and $\tilde{g}_j(z_k)\tilde{g}_j(1/z_k)>0$ for all $k\neq j$. Indeed, if  $\rho_j<0$ (in \eqref{rhoj} for the function $f(z)$), then multiplying by $\tilde{g}_j(z)$ we will change the sign of $\rho_j$, while preserving the signs of all other $\rho_k$'s, thus proving the statement by induction. It is easy to see now that we can take $\tilde{g}_j(z)=(z-\gamma_1)(z-\gamma_2)$, where $\gamma_1$ and $\gamma_2$ are chosen sufficiently close to $1/z_j$ so that the interval $(1/\gamma_1,1/\gamma_2)$ contains only $z_j$ (i.e. $z_k\not
\in(1/\gamma_1,1/\gamma_2)$ for $k\neq j$). 
\end{proof}

\begin{example}
As an illustration, consider $f(z)=(z-z_1)(z-z_2)$ where $0<z_1<z_2<1$. For the canonical weights for $f$ computed from formula \eqref{rhoj} we have $\rho_1<0$ and $\rho_2>0$. From the construction in the proof of Lemma~\ref{lemmod} we see that we can take $\tilde{g}(z)=(z-\gamma_1)(z-\gamma_2)$, where $0<1/\gamma_1<z_1<1/\gamma_2<z_2$. In this specific example we can take also $\tilde{g}(z)=z-\gamma_2$, where $z_1<1/\gamma_2<z_2$.
\end{example}

A lemma that we will make use of later is the following.
\begin{lem}\label{bsmod}
Suppose that $\rho_f\in\cMR$. Then we can factor $f$ as $f(z)=\hat{q}(z)\hat{f}(z)$, where $\hat{q}(z)$ is a polynomial with real zeros having positive canonical weights at all of its zeros inside the unit circle,  $\hat{q}(0)>0$ and $\hat{f}(z)$ is nonzero for $z\in[-1,1]$.  
\end{lem}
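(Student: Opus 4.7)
The plan is to construct $\hat q$ in two stages: first form a polynomial $q_0$ whose zeros in the closed unit disk are exactly those of $f$ there, then correct the signs of its canonical weights by multiplying by a factor from Lemma~\ref{lemmod}. Concretely, if $z_1,\dots,z_M$ are the zeros of $f$ in $(-1,1)$ and $\eta_\pm\in\{0,1\}$ indicate whether $\pm 1$ is a zero of $f$, set
$$q_0(z)=\epsilon\prod_{j=1}^M(z-z_j)\,(z-1)^{\eta_+}(z+1)^{\eta_-},$$
with $\epsilon\in\{\pm 1\}$ chosen so that $q_0(0)>0$. Then $q_0$ satisfies the hypotheses of Definition~\ref{mclass}, and Lemma~\ref{lemmod} applied to $q_0$ yields a polynomial $\tilde g$ with real roots of modulus strictly greater than $1$ such that $q_0\tilde g$ has positive canonical weights at all of its zeros in the open unit disk. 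Define $\hat q=q_0\tilde g$ (multiplied by $-1$ if necessary to enforce $\hat q(0)>0$; this sign flip leaves \eqref{rhoj} unchanged because both $\hat q'$ and $\hat q$ change sign simultaneously in the denominator), and set $\hat f=f/\hat q$.

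Verification of the required properties is then straightforward. The zeros of $\hat q$ are all real: those in $[-1,1]$ come from $q_0$, those of modulus $>1$ come from $\tilde g$, and the two sets are disjoint, so $\hat q$ has only simple real zeros; its only zeros in the open unit disk are $z_1,\dots,z_M$, where the canonical weights are positive by the choice of $\tilde g$. For the nonvanishing of $\hat f$ on $[-1,1]$, note that at each zero of $f$ in $[-1,1]$ the matching linear factor in $q_0$ cancels the simple zero of $f$, so $f/q_0$ extends continuously across each such point to the nonzero value $f'(z_j)/q_0'(z_j)$ (and analogously at $\pm 1$ if these are zeros of $f$); and $\tilde g$ is nonvanishing on $[-1,1]$ since all its roots lie strictly outside the closed unit disk. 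Hence $\hat f=(f/q_0)/\tilde g$ is continuous and nonzero on $[-1,1]$.

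The principal conceptual obstacle is the realization that the signs of the canonical weights of $\hat q$ are determined by $\hat q$ alone through \eqref{rhoj}, so the positivity of the canonical weights of the original $f$ (guaranteed by $\rho_f\in\cMR$) does \emph{not} automatically descend to $q_0$. Lemma~\ref{lemmod} is precisely the device that corrects the signs by inserting real factors with roots outside the closed unit disk, which is compatible with preserving the nonvanishing of $\hat f$ on $[-1,1]$.
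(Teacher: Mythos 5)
Your argument does prove the lemma as literally stated, but it is a genuinely different construction from the paper's, and the difference is worth understanding because the paper's later arguments quietly rely on it. The paper does not take the cancelling polynomial to consist only of the zeros of $f$ in the closed unit disk: it collects \emph{all} real zeros of $f$ in the larger interval $[-R_-,R_+]$, where $1/R_\pm$ are the smallest moduli of the positive/negative zeros of $f$ in the annulus $1/R\le|z|\le 1$. With that choice $f/q$ is real, continuous and nonvanishing on $[-R_-,R_+]$, hence of constant sign there, so by the computation behind Remark~\ref{remcv} the canonical weight of $q$ at each zero $z_j$ with $1/R\le|z_j|<1$ has the \emph{same} sign as the canonical weight of $f$ at $z_j$ --- and the latter is positive precisely because $\rho_f\in\cM_R$. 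The sign correction of Lemma~\ref{lemmod} is then needed only at the zeros with $|z_j|<1/R$, where $1/|z_j|>R$, so the correcting roots $\gamma$ can be taken of modulus greater than $R$. Your construction never uses the positivity of the canonical weights of $f$ (a warning sign: it would run verbatim for any $\rho_f\in\cM$), and corrects all signs indiscriminately; but for a zero $z_j$ with $1/R\le|z_j|<1$ whose weight needs flipping, the roots of $\tilde g$ must sit near $1/z_j$, i.e.\ at modulus at most about $R$, and they are not zeros of $f$. Consequently your $\hat f=f/(q_0\tilde g)$ may acquire poles in $1<|z|<R$, and the zeros of $\hat q/q$ in that annulus need not be zeros of $f/q$. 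This is harmless for the displayed statement, but the proof of Theorem~\ref{sbweightmass} uses exactly the extra structure of the paper's $\hat q$: it applies Krein's Lemma~\ref{kre} at the zeros of $\tilde q$ with $1<|z|<R$ (which therefore must be zeros of $\tilde f$) and the inclusion $1/(z-z_0)\in\AA^+_{\hat\nu}$ at the remaining ones (which therefore must have $|z_0|>R$). So: correct proof of the statement, but a strictly weaker factorization than the one the paper actually constructs and later needs.
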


\begin{proof}
Let $\frac{1}{R_+}=\inf\{z^+_i,\ \frac{1}{R}\le z^+_i\le 1\}$ and $\frac{1}{R_-}=\inf\{-z^-_i,\ -1\le z^-_i\le -\frac{1}{R}\}$ where $z^{\pm}_i$ are the zeros of $f$ for $\frac{1}{R}\le |z|\le 1$. Note that $f$ has finitely many zeros for $z\in [-R_-, R_+]$. Indeed, if $R_+<R$ then $f$ is analytic on $[1,R_+]$ and so  has only finitely many zeros. If $R_+=R$ then $f(R)\ne0$ by the definition of canonical weight, so $f$ can have only finitely many positive zeros. A similar argument can be used for the negative zeros. Let $z_1,z_2,\dots,z_N$ be the real zeros of $f$ for $z\in[-R_-, R_+]$ repeated according to the corresponding multiplicities. From Remark~\ref{remcv} it follows that at all $z_j$ with $1/R \leq |z_j|<1$, the polynomial $q(z)=\prod_{k=1}^{N}(z-z_k)$ has positive canonical weights (because the sign of the canonical weight at $z_j$ coincides with the sign of the canonical weight for $f$). Now we can use the construction in Lemma~\ref{lemmod} to define a polynomial $\tilde{q}(z)$ with real zeros of modulus greater than $R$, such that $\hat{q}(z)=q(z)\tilde{q}(z)$ has positive canonical weights also at the points $z_j$ where $|z_j|<1/R$.
\end{proof}

\begin{remark}\label{ed}
If $(a,b)\in{\hat l}_{\nu}$ and with the notations in Theorem~\ref{lnu} one can show \cite{ger2} that the polynomials $\{p_n(x)\}_{n=0}^{\infty}$ defined by equation \eqref{reconon} are orthonormal with respect to the measure $\rho_{{\hat f}_+}\in\cMR$, where $R$ is given in \eqref{1.8} and the non-canonical weights $\rho_j$ for the zeros $z_j$ of ${\hat f}_+$ with $|z_j|<1/R$ can be computed by
\begin{equation}\label{eq7}
\rho_j=\frac{2z_j p_+ (z_j,0)}{\hat f'_+(x_j)}.
\end{equation}
Moreover in this case ${\mathcal S}$ in \eqref{sfunction} is closely related to the scattering function of Case and Chiu~\cite{cc}.
\end{remark}

\begin{defn}\label{lclass} 
We define 
$\hat\ell=\cup\{\hat{l}_{\nu}:\nu(n)=R^{|n|},\; R>1\}$.
\end{defn}

In other words, $\hat{\ell}$ is the set of all recurrence coefficients (or equivalently, Jacobi matrices) which decay exponentially.

We now connect the rate of decay of the coefficients for polynomials orthogonal on the unit circle and those on the real line.

\begin{lem}\label{connectrec}
Suppose that $w(x)$ is a positive weight on $(-1,1)$ with coefficients $(a,b)$ and one of the cases 1--4 of Theorem~\ref{opuc} gives a well-defined $f_i$ that is strictly positive on the unit circle. Let $\alpha=\{\alpha_n\}$ be the recurrence coefficients associated with this weight and let $\nu$ be an increasing Beurling weight. Then 
$$
\alpha\in l_{\hat\nu} \quad \Rightarrow \quad (a,b)\in \hat l_{\nu}.
$$
\end{lem}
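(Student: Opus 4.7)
The plan is to translate the hypothesis $\alpha\in l_{\hat\nu}$ through the explicit algebraic formulas for $a_n^2$ and $b_n$ provided by Theorem~\ref{opuc}. Since the four cases are structurally identical up to index shifts, I will carry out the argument for Case~1 only; the other three cases follow the same template. The essential observation is that in each case, $1-4a_n^2$ and $b_{n-1}$ are polynomials in a bounded window of $\alpha_k$'s with no constant term, so each is dominated by a linear combination of the $|\alpha_k|$ in that window, the quadratic and cubic monomials being absorbed by the uniform bound $|\alpha_k|<1$ guaranteed by \eqref{receq}.

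Concretely, in Case~1,
$$1-4a_n^2 = 1-(1+\alpha_{2n})(1-\alpha_{2n+1}^2)(1-\alpha_{2n+2}), \qquad b_{n-1} = \tfrac{1}{2}\bigl[(1-\alpha_{2n})\alpha_{2n-1}-(1+\alpha_{2n})\alpha_{2n+1}\bigr].$$
Expanding the first expression, the constant term vanishes, and bounding each monomial of degree $\geq 2$ by one of its factors (using $|\alpha_k|<1$) yields an absolute constant $C>0$ with
$$|1-4a_n^2| \leq C\bigl(|\alpha_{2n}|+|\alpha_{2n+1}|+|\alpha_{2n+2}|\bigr),\quad |b_{n-1}|\leq C\bigl(|\alpha_{2n-1}|+|\alpha_{2n}|+|\alpha_{2n+1}|\bigr)$$
for all $n\geq 1$.

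It then suffices to bound $\sum_{n=1}^{\infty} n\,\nu(2n)|\alpha_{2n+j}|$ for the four shifts $j\in\{-1,0,1,2\}$. By \eqref{1.6} and \eqref{1.7},
$$\nu(2n) = \nu\bigl((2n+j)+(-j)\bigr) \leq \nu(2n+j)\,\nu(|j|),$$
so that
$$n\,\nu(2n) \leq \nu(|j|)\,n\,\nu(2n+j) \leq \nu(|j|)\,(2n+j+1)\,\nu(2n+j) = \nu(|j|)\,\hat\nu(2n+j)$$
as soon as $2n+j\geq 0$ (which holds for all but finitely many $n$; the remaining terms contribute only a bounded amount). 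Summing yields $\sum_n n\,\nu(2n)|\alpha_{2n+j}| \leq \nu(|j|)\|\alpha\|_{\hat\nu}<\infty$, and combining the four shifts gives $(a,b)\in \hat l_\nu$.

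I do not foresee any real obstacle; the lemma is essentially a direct manipulation. The only mildly delicate point is verifying that the weight $\hat\nu$ absorbs both the extra factor of $n$ in the definition of $\hat l_\nu$ and the shift between the index $2n$ appearing on the Jacobi side and the indices $2n+j$ appearing on the OPUC side. The former is precisely the reason for defining $\hat\nu(n)=(|n|+1)\nu(n)$, while the latter is handled uniformly via the Beurling submultiplicativity.
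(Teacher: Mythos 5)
Your proposal is correct and follows essentially the same route as the paper: bound $|1-4a_n^2|$ and $|b_{n-1}|$ by linear combinations of the nearby $|\alpha_k|$ using $|\alpha_k|<1$, then absorb the factor $n\,\nu(2n)$ into $\hat\nu$ at the shifted indices via monotonicity and submultiplicativity of the Beurling weight. The only cosmetic difference is that the paper handles the shift with $\hat\nu(2n)\le\hat\nu(2n-1)\hat\nu(1)$ while you use $\nu(2n)\le\nu(2n+j)\nu(|j|)$; these are the same estimate.
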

\begin{proof}
If we have case~1 then from the equations for the relations between the recurrence coefficients we find,  $(n+1)\nu(2n)|1-4a_n^2|\le2(\hat\nu(2n)|\alpha_{2n}|+\hat\nu(2n+1)|\alpha_{2n+1}|+\hat\nu(2n+2)|\alpha_{2n+2}|)$ where
 we have used that  that $|\alpha_n|<1$ for all $n$ and $\hat\nu(n)$ is 
increasing. Also $(n+1)\nu(2n)|b_{n-1}|\le \hat\nu(2n)|\alpha_{2n-1}|+\hat\nu(2n+1)|\alpha_{2n+1}|$. Since $\hat\nu(2n)<\hat\nu(2n-1)\hat\nu(1)$  the result follows for case~1. 
Cases~2, 3 and 4 follow in a similar manner.
\end{proof}

The following result was proved by Damanik and Simon \cite{ds} and we give an alternate demonstration. Later in Theorems~\ref{thmnozeros}--\ref{expdecay}, the theory of
Beurling weights is used to give a more precise equivalence between the
maximal rate of decay of $(a,b)$ and the summability of the Fourier
coefficients of $\mathcal S$ and $\sigma$ on the boundary of the region of meromorphicity. 
\begin{thm} \label{an}
Let $R>1$. For a Borel measure $\rho$ with recurrence coefficients $(a,b)$ the following conditions are equivalent:
\begin{itemize}
\item[{(i)}] $\limsup(|1-4a^2_n|+|b_{n-1}|)^{1/2n}\le\frac{1}{R}$;
\item[{(ii)}] $\rho\in\cap_{r<R}\cM_r$.
\end{itemize}
\end{thm}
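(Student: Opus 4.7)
For (i) $\Rightarrow$ (ii), the plan is to apply Theorem~\ref{lnu} and Remark~\ref{ed} directly. Fix $r<R$ and choose $r'\in(r,R)$; condition (i) gives a constant $C$ with $|1-4a_n^2|+|b_{n-1}|\leq C/(r')^{2n}$ for large $n$, which yields $\sum_{n=1}^{\infty} n\,r^{2n}(|1-4a_n^2|+|b_{n-1}|)<\infty$. Thus $(a,b)\in\hat{l}_\nu$ for the increasing Beurling weight $\nu(n)=r^{|n|}$, Theorem~\ref{lnu} places the Jost function $\hat f_+$ in $\AA^+_\nu$, and Remark~\ref{ed} identifies $\rho=\rho_{\hat f_+}\in\cM_r$. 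Since $r<R$ is arbitrary, $\rho\in\cap_{r<R}\cM_r$.

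For (ii) $\Rightarrow$ (i), the plan is to peel off the mass-carrying part of $\rho$ with Lemma~\ref{bsmod} and treat the remainder via the unit circle. Fix $r<R$, put $\nu(n)=r^{|n|}$, and write $f=\hat q\hat f$ as in Lemma~\ref{bsmod}, where $\hat q$ is a Bernstein-Szeg\H o polynomial absorbing all real zeros of $f$ in a neighbourhood of $[-1,1]$ and $\hat f$ is nonvanishing on $[-1,1]$. Because every zero of $f$ in $|z|\leq 1$ is real and simple by Definition~\ref{mclass}, $\hat f$ is in fact zero-free on the entire closed unit disk, so $\rho_{\hat f}$ is purely absolutely continuous with an analytic, strictly positive weight on $(-1,1)$.

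Transfer $\rho_{\hat f}$ to the unit circle through Case~2 of the Szeg\H o map (Theorem~\ref{opuc}); the resulting weight $f_2(\theta)=2/(\pi|\hat f(e^{i\theta})|^2)$ is analytic and nonvanishing on every annulus $1/r_0<|z|<r_0$ with $r_0<R$. The Nevai-Totik theorem \cite{nt} yields $\alpha\in\ell_{\hat\nu}$ for the associated Verblunsky coefficients, and Lemma~\ref{connectrec} translates this into $(a^{\hat f},b^{\hat f})\in\hat{l}_\nu$ for the recurrence coefficients of $\rho_{\hat f}$. Restoring the factor $\hat q$ recovers $\rho$: on the unit-circle side it amounts to multiplying $f_2$ by the trigonometric polynomial $1/|\hat q(e^{i\theta})|^2$ and inserting a finite collection of canonical masses, a modification which by Theorem~\ref{bpoly} and Remark~\ref{bsremark} alters only finitely many Verblunsky coefficients and hence preserves membership in $\ell_{\hat\nu}$. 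A second application of Lemma~\ref{connectrec} then yields $(a,b)\in\hat{l}_\nu$ for $\rho$, and letting $r\uparrow R$ produces (i).

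The main obstacle is rigorously justifying the invariance of the exponential decay rate under the Bernstein-Szeg\H o modification, particularly in the presence of non-canonical masses at zeros of $f$ with $|z_j|<1/R$. The philosophy is that Bernstein-Szeg\H o modifications are of finite rank in the sense of Theorem~\ref{bpoly}, but transferring this intuition through the Szeg\H o correspondence and the interplay between canonical and non-canonical point weights is the main technical burden; it is here that the algebraic framework of Sections~\ref{se2}--\ref{se4} does the real work in the alternate demonstration referenced in the theorem.
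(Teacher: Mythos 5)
Your (i) $\Rightarrow$ (ii) direction is essentially the paper's: reduce to $(a,b)\in\hat l_{\nu}$ for $\nu(n)=r^{|n|}$ and invoke Theorem~\ref{lnu} and Remark~\ref{ed}. The problem is in (ii) $\Rightarrow$ (i), where there is a genuine gap at the decisive step. Having obtained exponential decay for the zero-free, massless piece $\rho_{\hat f}$, you restore the factor $\hat q$ and the point masses by asserting that this ``alters only finitely many Verblunsky coefficients'' by Theorem~\ref{bpoly} and Remark~\ref{bsremark}. That is not what Theorem~\ref{bpoly} says, and the assertion is false: Theorem~\ref{bpoly} states that a Bernstein--Szeg\H{o} measure \emph{itself} has eventually constant recurrence coefficients; it does not say that multiplying an arbitrary measure by a Bernstein--Szeg\H{o} weight (a Christoffel/Geronimus-type transformation) is a finite-rank perturbation of the Jacobi matrix or of the Verblunsky coefficients --- in general it changes all of them. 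What is true is that such a modification preserves the \emph{rate} of decay, but that is precisely the hard quantitative content you are trying to bypass, and your closing paragraph concedes as much.

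The paper discharges this burden with a different decomposition and with cited quantitative results rather than a finite-rank argument: it multiplies $\sigma/\sin\theta$ by $|d(e^{i\theta})|^2$ where $d=d_1d_2$ removes \emph{all} zeros of $f$ in the annulus $1/r\le|z|\le r$ (not only the real zeros in the closed disk, so resonances are handled too), obtaining a strictly positive weight in ${\mathcal A}_{\hat\nu}$; it applies the Baxter-type Corollary 2 of \cite{gm}; it then strips off $|d_2|^2$ (the factor with zeros outside the closed unit disk) using Corollary 3 of \cite{gm}; it transfers to $[-1,1]$ via Lemma~\ref{connectrec} (note this is the Case~1 map, division by $|\sin\theta|$ --- your formula $2/(\pi|\hat f|^2)$ is Case~1, not Case~2, and the distinction matters because Lemma~\ref{connectrec} needs the circle weight to be strictly positive); and finally it adds the masses back by Theorems 6 and 7 of \cite{ger3}. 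Two further points your sketch glosses over: masses at zeros $z_j$ with $|z_j|\le 1/R$ need not be canonical, so ``inserting a finite collection of canonical masses'' does not cover the general case; and possible simple zeros of $f$ at $z=\pm1$ force the extra factor $h(z)$ of \eqref{hz} into the weight before the Szeg\H{o} map can be applied. Until the preservation of $\hat l_{\nu}$-membership under the Bernstein--Szeg\H{o} modification and the reinsertion of masses is actually proved (or correctly cited), the converse direction is not established.
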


\begin{proof}
If (i) holds then $\forall\ r\in\ (1,R)$ we see that $(a,b)\in \hat l_{\nu}$ where $\nu(n)=r^{|n|}$ so the result follows 
from Theorem~\ref{lnu} and Remark~\ref{ed}. 
For the opposite direction, pick $r\in (1,R)$ and let $\rho=\rho_f$. 
Let $d_1(z)=\prod_{j=1}^{M_1} (z-z_j)$ where $\{z_j\}$ are the zeros of $f$ for $\frac{1}{r}\leq |z|\leq 1$ and $d_2(z)=\prod_{j=1}^{M_2} (z-z'_j)$ where $\{z'_j\}$ are the zeros of $f$ for $1<|z|\le r$. Set $d(z)=d_1(z)d_2(z)$. Then $\frac{f(z)}{d(z)}$ is analytic for $|z|\le r$ and therefore $\frac{f}{d}\in{\mathcal A}_{\hat\nu}$ where $\nu(n)=r^{|n|}$. Moreover, since $\frac{f(z)}{d(z)}$ is nonzero for $\frac{1}{r}\leq|z|\leq r$, we see that $\frac{d}{f}\in{\mathcal A}_{\hat\nu}$.
Thus $w_1(\theta)=\frac{\sigma(\theta)|d(e^{i\theta})|^2}{\sin\theta}\in {\mathcal A}_{\hat\nu}$, and $w_1(\theta)$ is nonzero for $\frac{1}{r}\le|z|\le r$. By Corollary 2 in \cite{gm}, the recurrence coefficients $\{\alpha^{1}\}$ associated with $w_1$ as a weight on the unit circle are in $l_{\hat\nu}$. Note that 
$\frac{\sigma(\theta)|d_1(e^{i\theta})|^2}{\sin\theta}=\frac{w_1(\theta)}{|d_2(e^{i\theta})|^2}$ and therefore by Corollary 3 in \cite{gm} the recurrence coefficients $\{\alpha\}$ associated with $\frac{\sigma(\theta)|d_1(e^{i\theta})|^2}{\sin\theta}$ as a weight on the unit circle are also in $l_{\hat\nu}$.  Lemma~\ref{connectrec} now implies that the recurrence coefficients associated with $d\tilde{\rho}=\frac{\sigma(\theta)|d_1(e^{i\theta})|^2}{|h(e^{i\theta})|^2}dx$ are in $\hat l_{\nu}$ where $h(z)$ is given by \eqref{hz} with $\hat f_+=f$. Finally, Theorems 6 and 7 in \cite{ger3} show that we can add the masses back and not change the decay rate of the coefficients so that $(a,b)\in\hat l_{\nu}$ hence $\limsup(|1-4a^2_n|+|b_{n-1}|)^{1/2n}\le\frac{1}{r}$. The result follows by taking the limit $r\to R$.
\end{proof}

An immediate corollary is,
\begin{coro} \label{mr}
For a Borel measure $\rho$ with recurrence coefficients $(a,b)$ the following conditions are equivalent:
\begin{itemize}
\item[{(i)}] $(a,b)\in \hat{\ell}$;
\item[{(ii)}] $\rho\in\cM$.
\end{itemize}
\end{coro}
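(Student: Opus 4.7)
The plan is to derive this corollary as an essentially immediate consequence of Theorem~\ref{an} by taking unions over $R>1$ on both sides of the equivalence, using the nesting property $\cM_{R_1}\supset\cM_{R_2}$ for $R_1<R_2$. There is no genuine obstacle here; the only work is to convert between the two ways of quantifying exponential decay (membership in some weighted $\ell^1$-space vs.\ a $\limsup$ condition on the coefficients).

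For the implication (i)$\Rightarrow$(ii), I would unwind Definition~\ref{lclass}: if $(a,b)\in\hat\ell$, then $(a,b)\in\hat l_\nu$ with $\nu(n)=R^{|n|}$ for some $R>1$, so the convergence of
\[
\sum_{n=1}^{\infty} n\,R^{2n}\bigl(|1-4a_n^2|+|b_{n-1}|\bigr)<\infty
\]
forces $\limsup_{n\to\infty}\bigl(|1-4a_n^2|+|b_{n-1}|\bigr)^{1/2n}\le 1/R$. Theorem~\ref{an} then yields $\rho\in\cap_{r<R}\cM_r$, and in particular $\rho\in\cM$.

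For the reverse implication (ii)$\Rightarrow$(i), suppose $\rho\in\cM$, so $\rho\in\cM_R$ for some $R>1$. The nesting of the classes $\cM_r$ gives $\rho\in\cM_r$ for every $r\in(1,R]$, hence $\rho\in\cap_{r<R}\cM_r$. By Theorem~\ref{an},
\[
\limsup_{n\to\infty}\bigl(|1-4a_n^2|+|b_{n-1}|\bigr)^{1/2n}\le \frac{1}{R}.
\]
Now pick any $R'\in(1,R)$ and an intermediate $R''\in(R',R)$. The $\limsup$ bound gives $|1-4a_n^2|+|b_{n-1}|\le (1/R'')^{2n}$ for all sufficiently large $n$, so
\[
\sum_{n=1}^{\infty} n\,(R')^{2n}\bigl(|1-4a_n^2|+|b_{n-1}|\bigr)\le \sum_{n=1}^{\infty} n\,(R'/R'')^{2n}+C<\infty,
\]
where $C$ absorbs the finitely many initial terms. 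Thus $(a,b)\in\hat l_{\nu'}$ with $\nu'(n)=(R')^{|n|}$, proving $(a,b)\in\hat\ell$.
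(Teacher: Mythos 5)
Your proposal is correct and matches the paper's intent exactly: the paper presents Corollary~\ref{mr} as an ``immediate corollary'' of Theorem~\ref{an} with no further argument, and your write-up simply supplies the routine bookkeeping (taking unions over $R>1$, using the nesting $\cM_{R_1}\supset\cM_{R_2}$, and converting between the $\limsup$ bound and membership in $\hat l_{\nu}$ for $\nu(n)=(R')^{|n|}$ with $R'<R$). All of these conversions are carried out correctly, so nothing further is needed.
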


\begin{remark}\label{equiv}
Based on the equivalence between $\hat{\ell}$ and $\cM$ established in the above corollary, we can identify the function $\hat{f}_+$ in \eqref{eq3} (if we start with the recurrence coefficients) with the function $f$ in Definition \ref{mclass} (when we start with $\rho=\rho_f$). 
\end{remark}

\begin{remark}
The hypothesis in Corollary~\ref{mr} can be weakened somewhat using the results 
of Geronimo and Nevai \cite{ger2} and also Guseinov \cite{gus}. However this would take us out of the class $\cM$.
\end{remark}

We are interested in studying how the rate of convergence of the recurrence coefficients are reflected in the
rate of convergence of the Fourier coefficients of the measure. We begin with

\begin{lem}\label{relafourier}
If $\nu$ is an increasing Beurling weight, $\rho=\rho_f\in{\mathcal M}$  then
\begin{equation}\label{equivalences}
\log\left(\frac{\sigma}{\sin\theta}\right) \in \mathcal A_\nu \quad  \Rightarrow \quad 
 \text{both }\mathcal S \text{ and } \mathcal S^{- 1} \in \mathcal A_\nu \,.
\end{equation}
\end{lem}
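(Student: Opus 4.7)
\emph{Plan.} By \eqref{genmt}, on the unit circle
\[
W(\theta):=\frac{\sigma(\theta)}{\sin\theta}=\frac{2}{\pi f(z)f(1/z)},
\]
using that $f$ has real Taylor coefficients so $\overline{f(z)}=f(1/z)$ for $|z|=1$. The strategy is to perform a Szeg\H{o}-type factorization of $f(z)f(1/z)$ inside $\mathcal{A}_\nu^{\pm}$ and then read off $\mathcal{S}$ as a product of a rational piece and an outer piece.

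Since $\mathcal{A}_\nu$ is a commutative Banach algebra closed under $\exp$, the hypothesis $\log W\in\mathcal{A}_\nu$ yields $W,1/W\in\mathcal{A}_\nu$, equivalently
\[
f(z)f(1/z),\;\frac{1}{f(z)f(1/z)}\in\mathcal{A}_\nu.
\]
Elements of $\mathcal{A}_\nu$ are continuous on the Gelfand spectrum $\{1/R\le|z|\le R\}$ with $R=\lim\nu(n)^{1/n}\ge 1$, while $1/(f(z)f(1/z))$ has poles precisely at the zeros $z_j$ of $f$ in the open disk together with the reciprocals $1/z_j$. These poles must therefore lie off the spectrum, forcing $|z_j|<1/R$ strictly for each $j$ (and in particular $f$ has no zeros on $|z|=1$).

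Write $f(z)=P(z)F(z)$ with $P(z)=\prod_{j=1}^{M}(z-z_j)$, so that $F$ is analytic and nonvanishing on a neighborhood of $\{|z|\le 1\}$. The key step is to show $F,1/F\in\mathcal{A}_\nu^+$. The Laurent polynomial $P(z)P(1/z)$ lies in $\mathcal{A}_\nu$ and vanishes only at $z_j,1/z_j$, which are off the spectrum, so Wiener's inversion lemma gives $1/(P(z)P(1/z))\in\mathcal{A}_\nu$. Dividing,
\[
F(z)F(1/z)=\frac{f(z)f(1/z)}{P(z)P(1/z)}\in\mathcal{A}_\nu,
\]
and is strictly positive on the unit circle, so $\log(F(z)F(1/z))\in\mathcal{A}_\nu$. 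Its Fourier series is even in $\theta$, so we may split $\log(F(z)F(1/z))=H(z)+H(1/z)$ with $H\in\mathcal{A}_\nu^+$ and real coefficients; then $\tilde F:=e^H$ and $1/\tilde F=e^{-H}$ both lie in $\mathcal{A}_\nu^+$, with $\tilde F(z)\tilde F(1/z)=F(z)F(1/z)$. Since $F$ and $\tilde F$ are both holomorphic and nowhere zero on a neighborhood of the closed unit disk and have equal moduli on the circle, applying the maximum principle to $F/\tilde F$ and to $\tilde F/F$ forces $F=c\tilde F$ with $|c|=1$; reality pins $c=\pm1$, and a sign normalization gives $F,1/F\in\mathcal{A}_\nu^+$. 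This uniqueness-of-outer-factor argument is the main obstacle; the rest is bookkeeping.

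The conclusion follows from
\[
\mathcal{S}(z)=\frac{P(1/z)}{P(z)}\cdot\frac{F(1/z)}{F(z)}.
\]
The second factor lies in $\mathcal{A}_\nu^-\cdot\mathcal{A}_\nu^+\subset\mathcal{A}_\nu$ since $F\in\mathcal{A}_\nu^+$ implies $F(1/z)\in\mathcal{A}_\nu^-$. The first factor equals $z^{-M}\prod_j(1-z_jz)/(z-z_j)$; each $1/(z-z_j)$ is in $\mathcal{A}_\nu$ because $|z_j|<1/R$ together with $\lim\nu(n)^{1/n}=R$ makes its Laurent coefficients $z_j^n$ dominated by a $\nu$-summable geometric sequence. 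Hence $\mathcal{S}\in\mathcal{A}_\nu$, and the symmetric identity $\mathcal{S}^{-1}=(P(z)/P(1/z))(F(z)/F(1/z))$ yields $\mathcal{S}^{-1}\in\mathcal{A}_\nu$.
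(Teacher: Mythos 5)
Your proof is correct, and it is in fact more complete than the paper's. The paper's argument is a two-line computation: it asserts that the hypothesis forces $f\in\mathcal{A}_\nu^+$ to be nonzero for $|z|\le R$, takes the (real, even) Fourier coefficients $c_k$ of $\log\left(\pi\sigma/(2\sin\theta)\right)$, and concludes $\mathcal{S}=\exp\bigl(\sum_{k\ge1}c_k(z^k-z^{-k})\bigr)\in\mathcal{A}_\nu$. That is precisely your outer-factor computation, but it silently assumes $f$ has no zeros in the open unit disk; for a general $\rho_f\in\mathcal{M}$ this can fail. For instance $f(z)=1-4z$ gives $\sigma/\sin\theta=2/(\pi(17-8\cos\theta))$, which satisfies the hypothesis for $\nu(n)=2^{|n|}$, yet $f(1/4)=0$ and the exponential formula returns the $\mathcal{S}$ of the outer function $4-z$ rather than of $f$. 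Your decomposition $f=PF$, the maximum-principle identification $F=\pm e^{H}$, and the separate treatment of the rational factor $P(1/z)/P(z)$ (whose Laurent coefficients are $\nu$-summable precisely because you first showed $|z_j|<1/R$) close exactly this gap, so your argument proves the lemma as stated while the paper's proves it under the implicit extra assumption that $f$ is zero free in the closed disk --- which is the only situation in which the lemma is actually invoked later (Theorem~\ref{expdecaynozeros}). One sentence of yours should be tightened: to conclude $\log\left(F(z)F(1/z)\right)\in\mathcal{A}_\nu$ you need $F(z)F(1/z)$ to be nonvanishing on the whole Gelfand spectrum $1/R\le|z|\le R$ with winding number zero on $|z|=1$, not merely strict positivity on the unit circle (consider $z+3$ with $R>3$). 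This is available in your setup, since $F(z)F(1/z)$ is the product of the two elements $f(z)f(1/z)$ and $1/(P(z)P(1/z))$ that you have already shown to be invertible in $\mathcal{A}_\nu$, but the justification as written cites only positivity on the circle.
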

\begin{proof}
The  conditions show that $f\in {\mathcal A}^+_{\nu}$ and is nonzero for $|z|\le R$ where $R$ is given by \eqref{1.8}. If we set 
$$c_k=\frac{1}{2\pi}\int_{-\pi}^{\pi} e^{-ik\theta}\log\left(\frac{\pi\sigma(\theta)}{2\sin(\theta)}\right)d\theta$$
then from Remark~\ref{even}  we see that $c_k$ are real numbers.
Therefore $\mathcal S(z)=\exp(\sum_{k=1}^{\infty}(c_k z^k-c_{-k} z^{-k}))=\exp(\sum_{k=1}^{\infty}(c_k z^k-c_{k} z^{-k}))$ from which the result follows.
\end{proof}

The use of Beurling weights allows us to give a complete characterization of the exponential decay of $(a,b)$ in terms of  the Fourier coefficients of $\mathcal S$. 

\begin{thm}\label{thmnozeros}
Let $(a,b)$ be the recurrence coefficients associated with a measure $\rho$, satisfying one of the equivalent conditions in Corollary~\ref{mr}. 
Suppose that $\nu$ is an increasing Beurling weight with $R>1$ and let $z_1,\ldots, z_i$ be the zeros of $\hat f_+$ such that $1/R\le|z_j|\le 1$. 
Then ${\mathcal P}_-(\prod_{j=1}^i\frac{z-z_j}{1/z-z_j}\mathcal S)\in {\mathcal A}^-_{\hat\nu}$ and $\rho$ has canonical weights for $\frac{1}{R}\le |z|<1$ if and only if $(a,b)\in \hat l_{\nu}$. 
\end{thm}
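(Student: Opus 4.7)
The key reduction is the factorization $f(z)=d(z)\tilde f(z)$ with $d(z)=\prod_{j=1}^i(z-z_j)$ collecting the zeros of $f=\hat f_+$ (identified via Corollary~\ref{mr} and Remark~\ref{equiv}) that lie in $1/R\le|z|\le 1$. A direct calculation gives the identity
\[
\prod_{j=1}^i\frac{z-z_j}{1/z-z_j}\,{\mathcal S}(z)=\frac{\tilde f(1/z)}{\tilde f(z)},
\]
and the Blaschke-like prefactor is precisely what makes $\tilde f$ nonvanishing on the closed unit disk; both directions of the proof will work with the right-hand side.

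For the ``if'' direction, assume $(a,b)\in\hat l_\nu$. Theorem~\ref{lnu} gives $\hat f_+\in{\mathcal A}^+_\nu$ and $(1-z^2)\hat f'_+\in{\mathcal A}^+_\nu$ (equation~\eqref{derivative}), while Remark~\ref{ed} gives $\rho=\rho_{\hat f_+}\in\cMR$, producing the canonical-weights half of the conclusion for free. Iterated application of Krein's lemma (Lemma~\ref{kre}) at each zero of $\hat f_+$ in $|z|\le 1<R$ yields $\tilde f\in{\mathcal A}^+_\nu$. Differentiating $\hat f_+=d\tilde f$ and substituting \eqref{derivative}, I get
\[
(1-z^2)d(z)\tilde f'(z)=(1-z^2)\hat f'_+(z)-(1-z^2)d'(z)\tilde f(z)\in{\mathcal A}^+_\nu;
\]
another round of Krein applied to the polynomial $(1-z^2)d(z)$ (all of whose roots are in $|z|\le 1<R$, with possible double zeros at $\pm 1$ handled by two successive divisions) then yields $\tilde f'\in{\mathcal A}^+_\nu$, and Lemma~\ref{hatnu} upgrades this to $\tilde f\in{\mathcal A}^+_{\hat\nu}$. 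Since $\tilde f$ is nonvanishing on $|z|\le 1$ and lies in the Wiener algebra, Wiener's $1/f$ theorem gives $1/\tilde f\in{\mathcal A}^+_1$. Finally, Lemma~\ref{lemgm} applied with $h=\tilde f(1/z)\in{\mathcal A}^-_{\hat\nu}$ (so $\mathcal P_-(h)=h$) and $g=1/\tilde f\in{\mathcal A}^+_1$ produces $\mathcal P_-\bigl(\tilde f(1/z)/\tilde f(z)\bigr)\in{\mathcal A}^-_{\hat\nu}$.

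For the ``only if'' direction, Corollary~\ref{mr} gives $\rho\in\cM$, so $f=\hat f_+$ is analytic in some $|z|<r$ with $r>1$, and the canonical-weights hypothesis makes the Blaschke prefactor above match $d(z)/d(1/z)$ exactly. The plan is to reverse the forward chain by passing to the unit circle through the Szeg\H{o} maps (Theorem~\ref{opuc}): the modified weight $w_1(\theta)=\sigma(\theta)|d(e^{i\theta})|^2/\sin\theta=2/(\pi|\tilde f(e^{i\theta})|^2)$ is a positive weight on the unit circle whose Szeg\H{o}/Jost function is essentially $1/\tilde f$ and whose OPUC scattering function is $\tilde f(z)/\tilde f(1/z)$. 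The hypothesis that $\mathcal P_-$ of the right-hand side of the key identity lies in ${\mathcal A}^-_{\hat\nu}$ is then exactly the OPUC negative-Fourier-decay condition for this scattering function, so Corollaries~2 and 3 of \cite{gm} give the OPUC recurrence coefficients in $l_{\hat\nu}$. Lemma~\ref{connectrec} transfers this decay to $\hat l_\nu$ on the real-line side for the Bernstein-Szeg\H{o}-modified absolutely continuous measure, and Theorems~6 and 7 of \cite{ger3} then allow the mass points (with the canonical weights guaranteed by hypothesis) to be restored without loss of decay, yielding $(a,b)\in\hat l_\nu$ for the original $\rho$.

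The hardest step will be the reverse direction: one must verify that after the Szeg\H{o} map and the Blaschke/Bernstein-Szeg\H{o} modifications, the specific projection $\mathcal P_-\bigl(\prod(z-z_j)/(1/z-z_j)\,{\mathcal S}\bigr)$ on the real-line side becomes precisely the input required by the OPUC results of \cite{gm}, and that the inverse passage (undoing the Szeg\H{o} map and reinserting masses) is quantitatively faithful in the Beurling norm $\hat\nu$. The technical subtlety of simple zeros of $\hat f_+$ at $z=\pm 1$ — which sit on $|z|=1$ and are the reason for the $(1-z^2)$ factor in \eqref{derivative} — also appears in both directions and is why the choice of $d$ must absorb them.
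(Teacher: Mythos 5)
Your overall strategy matches the paper's: factor out zeros of $\hat f_+$, use Krein's lemma and Lemma~\ref{lemgm} in the forward direction, and pass to the unit circle via the results of \cite{gm}, Lemma~\ref{connectrec} and Theorems 6--7 of \cite{ger3} in the reverse direction. But there is a genuine gap in your forward direction. The product $\prod_{j=1}^i\frac{z-z_j}{1/z-z_j}$ runs only over the zeros of $\hat f_+$ in the annulus $1/R\le|z|\le 1$, so with $d(z)=\prod_{j=1}^i(z-z_j)$ the function $\tilde f=\hat f_+/d$ is \emph{not} nonvanishing on the closed unit disk in general: the hypotheses (only Corollary~\ref{mr}) allow $\hat f_+$ to have zeros $z_{j'}$ with $|z_{j'}|<1/R$, corresponding to mass points whose weights need not be canonical. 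At such a zero $\tilde f$ still vanishes, so Wiener's $1/f$ theorem does not give $1/\tilde f\in{\mathcal A}_1^+$ and your application of Lemma~\ref{lemgm} collapses. If instead you divide out \emph{all} zeros in $|z|\le1$ (as your invocation of Krein's lemma ``at each zero of $\hat f_+$ in $|z|\le1$'' suggests), then the quotient you control is the fully reduced scattering function $\tilde f_+(1/z)/\tilde f_+(z)$, which differs from the function in the statement by the factor $\prod_{|z_{j'}|<1/R}\frac{1/z-z_{j'}}{z-z_{j'}}$. The missing step --- exactly the one the paper supplies --- is to observe that each such factor lies in ${\mathcal A}^-_{\hat\nu}$ precisely because $|z_{j'}|<1/R$, and to multiply it back in using Lemma~\ref{lemgm} once more. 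As written, your key identity and your nonvanishing claim cannot both hold.

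The reverse direction follows the paper's route (Theorem 2 of \cite{gm} applied to the circle weight $|d(e^{i\theta})|^2/|\hat f_+(e^{i\theta})|^2$, then Lemma~\ref{connectrec}, then Theorems 6 and 7 of \cite{ger3} to reinstate the masses), but you have only stated the plan: the step you yourself flag as hardest --- that ${\mathcal P}_-\bigl(\prod_{j}\frac{z-z_j}{1/z-z_j}\,\mathcal S\bigr)\in{\mathcal A}^-_{\hat\nu}$ is precisely the input required by \cite{gm} for that weight --- is asserted rather than verified, and it is the canonical-weights hypothesis that legitimizes restoring the masses via \cite{ger3}. So the forward half needs the repair above, and the reverse half needs to be carried out rather than outlined.
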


\begin{proof}
The hypotheses of the theorem show that $\hat f_+(z) \in {\mathcal A}^+_{\hat\nu_0}$ where $\nu_0(n)=r_0^{|n|}$ for some $r_0>1$. 
 If $(a,b)\in \hat l_{\nu}$ then from Theorem~\ref{lnu} and Lemmas~\ref{hatnu}-\ref{kre} we see that $\hat f_+(z) \in {\mathcal A}^+_{\hat\nu}$. If we set $d(z)=\prod_{j=1}^{M}(z-z_j')$, where $\{z_j'\}$ are the zeros of $\hat f_+$ for $|z|\leq 1$, then Lemmas~\ref{hatnu}-\ref{kre} show that $\tilde f_+(z)=\frac{\hat f_+(z)}{d(z)}\in{\mathcal A}^+_{\hat\nu}$ so $\frac{1}{\tilde f_+}\in{\mathcal A}_1^+$. From Lemma~\ref{lemgm}, ${\mathcal P}_-(\tilde{\mathcal S})\in{\mathcal A}^-_{\hat\nu}$ where $\tilde{\mathcal S}=\frac{\tilde f_+(1/z)}{\tilde f_+(z)}$. Since for $|z_j'|<1/R$, $\frac{1/z-z_j'}{z-z_j'}\in{\mathcal A}_{\hat\nu}^-$ and, for $h$ defined by \eqref{hz}, we also have $\frac{h(1/z)}{h(z)}\in{\mathcal A}_{\hat\nu}^-$, we find that ${\mathcal P}_-(\prod_{j=1}^i\frac{z-z_j}{1/z-z_j}\mathcal S)\in {\mathcal A}^-_{\hat\nu}$. 
Now we prove the converse. 
Let $d_1(z)=\prod_{j=1}^{i} (z-z_j)$ where $\{z_j\}$ are the zeros of $\hat f_+$ for $\frac{1}{R}\leq |z|\leq 1$. 
Using Theorem 2 in \cite{gm} we see that the recurrence coefficients $\{\alpha\}$ associated with $\frac{|d_1(e^{i\theta})|^2}{|{\hat f}_+(e^{i\theta})|^2}$ as a weight on the unit circle are in $l_{\hat\nu}$. Lemma~\ref{connectrec} implies that the recurrence coefficients associated with the measure $d{\rho_1}=\frac{\sin \theta \,|d_1(e^{i\theta})|^2}{ |{\hat f}_+(e^{i\theta})|^2|h(e^{i\theta})|^2}dx$ on $[-1,1]$ are in $\hat l_{\nu}$, where $h(z)$ is given by \eqref{hz}. As above, Theorems 6 and 7 in \cite{ger3} show that we can add the masses at the zeros of $\hat f_+$ for $|z|<1$ preserving the decay rate of the coefficients and therefore $(a,b)\in \hat l_{\nu}$.

\end{proof}

This leads to 
\begin{thm}\label{expdecaynozeros}
Let $\nu$ be an increasing Beurling weight with $R>1$ and $d\rho=\sigma (\theta)dx$ be positive absolutely continuous measure, where $x=\cos\theta$, $0<\theta<\pi$.  Set
$\frac{\sigma(-\theta)}{\sin(-\theta)} = \frac{\sigma(\theta)}{\sin\theta}$,
$0<\theta<\pi$. Then 
$$(a,b)\in \hat l_{\nu}$$ and 
$\sigma (z)$ has an  extension to 
$1/R\le |z|\le R$ such that $|\frac{\sigma (z)}{z-1/z}|<\infty$ for all $z$ in this region
if and only if
\begin{equation}\label{dervfour}
\log\left(\frac{\sigma(z)}{z-1/z}\right)\in {\mathcal A}_{\hat \nu}.  
\end{equation}
\end{thm}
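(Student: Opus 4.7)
The plan for the forward direction is as follows. I would assume $(a,b)\in\hat l_\nu$ and the stated extension, and invoke Theorem~\ref{lnu} together with Remark~\ref{equiv} to get $\hat f_+\in \mathcal A_\nu^+$ and $(1-z^2)\hat f_+'(z)\in\mathcal A_\nu^+$. Next I would use the identity $\sigma(z)/(z-1/z)=1/(i\pi\hat f_+(z)\hat f_+(1/z))$ coming from Definition~\ref{mclass}: the finiteness of this ratio on $1/R\le|z|\le R$ combined with the absolute continuity of $\rho$ (which already forces $\hat f_+\neq 0$ inside the unit disk) forces $\hat f_+$ to be zero-free on the closed disk $|z|\le R$. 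A Wiener-L\'evy argument then gives $1/\hat f_+\in\mathcal A_\nu^+$. Multiplying produces $(1-z^2)\hat f_+'/\hat f_+\in\mathcal A_\nu^+$; this function vanishes at $z=\pm 1$ purely because of the $1-z^2$ factor, so two applications of Lemma~\ref{kre} (at $z_0=1$ and $z_0=-1$) strip these factors to yield $\hat f_+'/\hat f_+\in\mathcal A_\nu^+$. Lemma~\ref{hatnu} then promotes this to $\log\hat f_+(z)\in\mathcal A_{\hat\nu}^+$, and hence $\log\hat f_+(1/z)\in\mathcal A_{\hat\nu}^-$. Taking logarithms in the factorization above places $\log(\sigma(z)/(z-1/z))$ in $\mathcal A_{\hat\nu}^++\mathcal A_{\hat\nu}^-\subset \mathcal A_{\hat\nu}$.

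For the converse, I would assume $\log(\sigma(z)/(z-1/z))\in\mathcal A_{\hat\nu}$. Since elements of $\mathcal A_{\hat\nu}$ are analytic on $1/R<|z|<R$ and continuous on its closure, exponentiating immediately delivers the bounded nonvanishing extension of $\sigma(z)/(z-1/z)$ on the closed annulus. Next, using the invariance of $\sigma(z)/(z-1/z)$ under $z\mapsto 1/z$ (a direct consequence of the symmetry hypothesis $\sigma(-\theta)/\sin(-\theta)=\sigma(\theta)/\sin\theta$), one checks that the nonconstant Fourier coefficients of $g(z):=\log(\sigma(z)/(z-1/z))$ are real and satisfy $g_n=g_{-n}$, so a Szeg\H{o}-type factorization produces $f(z)=A\exp(-g_+(z))\in\mathcal A_{\hat\nu}^+$ (with $A>0$ chosen so that $f$ has real coefficients and $i\pi f(z)f(1/z)=e^{-g(z)}$), which is nonzero on $|z|\le R$ and satisfies $\sigma(z)/(z-1/z)=1/(i\pi f(z)f(1/z))$. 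This exhibits $\rho$ as $\rho_f\in\cM_R$; by Corollary~\ref{mr}, $(a,b)\in\hat\ell$, and by Remark~\ref{equiv}, $f=\hat f_+$. Lemma~\ref{relafourier} applied with weight $\hat\nu$ then gives $\mathcal S,\mathcal S^{-1}\in\mathcal A_{\hat\nu}$, so $\mathcal P_-(\mathcal S)\in\mathcal A_{\hat\nu}^-$. Since $\hat f_+$ has no zeros in $1/R\le|z|\le 1$ and $\rho$ has no mass points, both the product and canonical-weight conditions in Theorem~\ref{thmnozeros} become vacuous, and the ``if'' direction of that theorem delivers $(a,b)\in\hat l_\nu$.

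The hard part will be the forward direction. Theorem~\ref{lnu} supplies only $\hat f_+\in\mathcal A_\nu^+$ together with the \emph{weighted} derivative estimate $(1-z^2)\hat f_+'\in\mathcal A_\nu^+$, and one must squeeze out of this the much stronger smoothness $\log\hat f_+\in\mathcal A_{\hat\nu}^+$, corresponding to the extra factor $|n|+1$ in the Beurling weight. The decisive observation is that the $1-z^2$ prefactor in \eqref{derivative} encodes only potential boundary behavior at $z=\pm 1$: once the extension hypothesis excludes zeros of $\hat f_+$ on the unit circle, Lemma~\ref{kre} cancels this factor cleanly, and Lemma~\ref{hatnu} then converts the derivative-level bound in $\mathcal A_\nu$ into the function-level bound in the heavier algebra $\mathcal A_{\hat\nu}$.
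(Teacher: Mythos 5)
Your proposal is correct and follows essentially the same route as the paper's proof: the forward direction uses Theorem~\ref{lnu}, the nonvanishing of $\hat f_+$ on $|z|\le R$, Lemma~\ref{kre} to strip the $1-z^2$ factor, Lemma~\ref{hatnu} to pass to $\mathcal A_{\hat\nu}$, and Wiener--L\'evy (you merely take the logarithmic derivative before, rather than after, removing $1-z^2$, which is an immaterial reordering), while the converse invokes Lemma~\ref{relafourier} and Theorem~\ref{thmnozeros} exactly as the paper does, with the intermediate details spelled out more explicitly.
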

\begin{proof}
If $(a,b)\in \hat l_{\nu}$ then Theorem~\ref{lnu} shows that $\lim_{n\to\infty}\hat\psi_n(z)=\hat f_+(z)\in {\mathcal A}^+_{\nu}$ and that $\frac{\sigma(\theta)}{\sin\theta}=\frac{2}{\pi |\hat f_+(z)|^2}=\frac{2}{\pi \hat f_+(z) \hat f_+(1/z)}$ since $\hat f_+$ has real coefficients. Since $\rho$ is absolutely continuous, ${\hat f_+(z)}$ is nonzero for $|z|\leq 1$ by Remark~\ref{ed}. Moreover, since $|\frac{\sigma (z)}{z-1/z}|<\infty$ for $1/R \le |z|\le R$, it follows that ${\hat f_+(z)}$ is nonzero for $|z|\leq R$.  
From Theorem~\ref{lnu} and Lemma~\ref{kre} it follows that $\frac{d}{dz}{\hat f_+(z)}\in {\mathcal A}^+_{\nu}$. Applying Lemma~\ref{hatnu} we see that ${\hat f_+(z)}\in {\mathcal A}^+_{\hat\nu}$.  By the Wiener-Levy theorem we deduce that $\log(\hat f_+(z))\in {\mathcal A}^+_{\hat\nu}$ from which 
equation~\eqref{dervfour} follows.

To show the other direction we see that if equation~\eqref{dervfour} holds then $\frac{\sigma(z)}{z-1/z}\in {\mathcal A}_{\hat \nu}$ so Lemma~\ref{relafourier} and Theorem~\ref{thmnozeros} imply that $(a,b)\in \hat l_{\nu}$.
\end{proof}
This illustrates the  connection between the  rate of decay of the recurrence coefficients and the Fourier coefficients of $\frac{\sigma(z)}{z-1/z}$ in the case when $\frac{\sigma(z)}{z-1/z}$ is finite for $1/R\le |z|\le R$. If this is not the case then slight modifications are necessary. We begin with an analog of Theorem~3 in \cite{gm} modified to include measures with masses.
\begin{thm}\label{expdecayzeros}
Let $\nu$ be an increasing Beurling weight with $R>1$. Let $\rho_{f_3}\in\cMR$ and suppose that $f_3(z)=f_1(z)f_2(z)$, where $f_1$ and $f_2$ are analytic for $|z|< R$, continuous for $|z|\leq R$ and $f_2$ is nonzero for $|z|\leq 1$. Let $(a^j,b^j)$ denote the recurrence coefficients of $\rho_{f_j}$ for $j=1,2,3$. If $(a^j,b^j)\in\hat l_{\nu}$ for $j=1,2$ then $(a^3,b^3)\in \hat l_{\nu}$.
\end{thm}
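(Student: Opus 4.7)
The strategy is to reduce the statement to Theorem~\ref{thmnozeros} via the multiplicativity of the scattering function. By Remark~\ref{equiv} each $f_j$ may be identified with the function $\hat f_+$ from Theorem~\ref{lnu} associated to $(a^j,b^j)$, and the factorization $f_3 = f_1 f_2$ immediately yields $\mathcal{S}_3 = \mathcal{S}_1 \mathcal{S}_2$. Since $f_2$ is nonzero on $|z|\leq 1$, the zeros of $f_3$ in the annulus $1/R \leq |z| \leq 1$ coincide with those of $f_1$ there; denote them by $\{z_j\}_{j=1}^{i}$ and set $X(z) = \prod_{j=1}^{i} \frac{z-z_j}{1/z - z_j}$, the modifier appearing in Theorem~\ref{thmnozeros} simultaneously for $f_1$ and $f_3$.

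Applying the forward direction of Theorem~\ref{thmnozeros} to $(a^1,b^1) \in \hat l_\nu$ gives $\mathcal{P}_-(X\mathcal{S}_1) \in \mathcal{A}_{\hat\nu}^-$. Applying it to $(a^2,b^2) \in \hat l_\nu$, where the corresponding product is empty because $f_2$ has no zeros in the annulus, gives $\mathcal{P}_-(\mathcal{S}_2) \in \mathcal{A}_{\hat\nu}^-$. To combine these I split $\mathcal{S}_2 = \alpha + \beta$ with $\alpha = \mathcal{P}_+(\mathcal{S}_2) \in \mathcal{A}_1^+$ and $\beta = \mathcal{S}_2 - \alpha = \mathcal{P}_-(\mathcal{S}_2) - \mathcal{S}_2(0) \in \mathcal{A}_{\hat\nu}^-$. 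Invoking Lemma~\ref{lemgm} with $h = X\mathcal{S}_1$ (which satisfies $\mathcal{P}_-(h) \in \mathcal{A}_{\hat\nu}^-$) once with $g = \alpha \in \mathcal{A}_1^+$ and once with $g = \beta \in \mathcal{A}_{\hat\nu}^-$, and summing the two conclusions, yields $\mathcal{P}_-(X\mathcal{S}_1\mathcal{S}_2) = \mathcal{P}_-(X\mathcal{S}_3) \in \mathcal{A}_{\hat\nu}^-$.

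Finally, since $\rho_{f_3} \in \cMR$ by hypothesis, it has positive canonical weights at every zero of $f_3$ with $1/R \leq |z| < 1$. Combined with the negative-Fourier-coefficient decay just established for $X\mathcal{S}_3$, the reverse direction of Theorem~\ref{thmnozeros} yields $(a^3,b^3) \in \hat l_\nu$, completing the argument.

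The main obstacle I foresee is the verification that the auxiliary function $X\mathcal{S}_1$ (and likewise $\mathcal{S}_2$) lies in the Wiener algebra $\mathcal{A}_1$, a prerequisite for invoking Lemma~\ref{lemgm}. This requires a bookkeeping of pole/zero cancellations: the zeros of $X$ at each $z_j$ cancel the poles of $\mathcal{S}_1$ there, the poles of $X$ at $1/z_j$ cancel the corresponding zeros of $\mathcal{S}_1$, and any remaining singularities of $\mathcal{S}_1$ (coming from zeros of $f_1$ in $1 < |z| < R$) lie away from $|z|=1$; hence $X\mathcal{S}_1$ is analytic in a genuine annulus about $|z|=1$, and so lies in $\mathcal{A}_1$. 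The analogous fact for $\mathcal{S}_2$ is immediate from $f_2$ being invertible on $|z|\leq 1$.
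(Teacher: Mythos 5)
Your proposal is correct and follows essentially the same route as the paper: both reduce to Theorem~\ref{thmnozeros} via the factorization $\mathcal{S}_3=\mathcal{S}_1\mathcal{S}_2$, use the same Blaschke-type modifier built from the zeros of $f_1$ in $1/R\le|z|\le 1$ (which coincide with those of $f_3$ there), and invoke Lemma~\ref{lemgm} to control $\mathcal{P}_-$ of the product. Your variant of splitting only $\mathcal{S}_2$ into $\mathcal{A}_1^+$ and $\mathcal{A}_{\hat\nu}^-$ parts and applying Lemma~\ref{lemgm} twice with $h=X\mathcal{S}_1$ is just a minor repackaging of the paper's three-term expansion, and your extra care in checking $X\mathcal{S}_1\in\mathcal{A}_1$ is a point the paper asserts without detail.
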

\begin{proof}
For $j=1,2,3$, let $\mathcal{S}_j$ denote the function associated with $f_j$ by formula \eqref{sfunction}. Set $\tilde d(z)=\prod_{j=1}^i(z-z_j)$ where $\{z_j\}_{j=1}^i$ are the zeros of $f_1(z)$ for $1/R\le|z|\le1$. Then 
$ \tilde{\mathcal S}_1=\frac{\tilde d(z)}{\tilde d(1/z)}{\mathcal S}_1$ and ${\mathcal S}_2$ are in ${\mathcal A}_1$. Also 
$$
\tilde{\mathcal S}_3=\frac{\tilde d(z)}{\tilde d(1/z)}{\mathcal S}_3=
\frac{\tilde d(z)}{\tilde d(1/z)}{\mathcal S}_1{\mathcal S}_2=\tilde{\mathcal S}_1{\mathcal S}_2.
$$
 From Theorem~\ref{thmnozeros} it follows
$$(a^1,b^1)\in \hat l_{\nu}\Rightarrow \tilde{\mathcal S}_1^-={\mathcal P}_-(\tilde{\mathcal S}_1)\in {\mathcal A}^-_{\hat\nu}$$ and
$$(a^2,b^2)\in \hat l_{\nu}\Rightarrow{\mathcal S}_2^-={\mathcal P}_-({\mathcal S}_2)\in {\mathcal A}^-_{\hat\nu}.$$
With $\tilde{\mathcal S}^+_1=\tilde{\mathcal S}_1-\tilde{\mathcal S}_1^-\in {\mathcal A}^+_{1}$ and ${\mathcal S}^+_2={\mathcal S}_2-{\mathcal S}_2^-\in {\mathcal A}^+_{1}$ we see 
$$
{\mathcal P}_-(\tilde{\mathcal S}_3)={\mathcal P}_-(\tilde{\mathcal S}_1{\mathcal S}_2)={\mathcal P}_-(\tilde{\mathcal S}^-_1 {\mathcal S}_2^-+{\tilde{\mathcal S}}^+_1 {\mathcal S}^-_2+{\tilde{\mathcal S}}^-_1 {\mathcal S}^+_2)\in {\mathcal A}_{\hat\nu}^-,
$$
where Lemma~\ref{lemgm} has been used to obtain the last inclusion. The result now follows from Theorem~\ref{thmnozeros}. 
\end{proof}
By Theorem~\ref{bpoly}, the coefficients $(a,b)$ for Bernstein-Szeg\H{o} measures belong to $\hat l_{\nu}$ for every $\nu$. We will use this fact to show that multiplication of the original weight by a Bernstein-Szeg\H{o} weight does not affect the rate of decay of the recurrence coefficients.

\begin{thm}\label{sbweightmass}
Let $\nu$ be an increasing Beurling weight with $R>1$.
Suppose that $\rho_{f}\in{\mathcal M}_R$. We can write $f$ as $f(z)=q(z)\tilde{f}(z)$, with $q(z)=c\prod_{j=1}^{M}(z-z_j)$ where $\{z_j\}$ are the zeros of $f$ inside and on the unit circle and $c$ is chosen so that $q(0)>0$. Let $(a,b)$ and $(\tilde a,\tilde b)$ denote the recurrence coefficients of $\rho_f$ and $\rho_{\tilde{f}}$, respectively. If  $(\tilde a,\tilde b)\in \hat l_{\nu}$ then $(a,b)\in\hat l_{\nu}$.
\end{thm}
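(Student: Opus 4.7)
The plan is to apply Theorem~\ref{thmnozeros} directly to $\rho_f$. That theorem requires two conditions: (a) $\rho_f$ has positive canonical weights at the zeros with $1/R \le |z_j|<1$, which is immediate from $\rho_f\in\mathcal{M}_R$; and (b) $\mathcal{P}_-\bigl(\prod_{j=1}^{i}\frac{z-z_j}{1/z-z_j}\,\mathcal{S}_f\bigr) \in \mathcal{A}^-_{\hat\nu}$, where $\mathcal{S}_f=f(1/z)/f(z)$ and $z_1,\ldots,z_i$ are the zeros of $f$ with $1/R \le |z_j|\le 1$. Thus the task reduces to verifying~(b) from the hypothesis $(\tilde a,\tilde b)\in \hat l_{\nu}$.

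The key algebraic observation is that $\mathcal{S}_f=\mathcal{S}_q\mathcal{S}_{\tilde f}$, and $\mathcal{S}_q=\prod_k\frac{1/z-z_k^q}{z-z_k^q}$, where $\{z_k^q\}$ are the zeros of $q$ (i.e., all zeros of $f$ in the closed unit disk). Partitioning the $z_k^q$'s into the annulus zeros $\{z_j\}_{j=1}^i$ and the \emph{inner zeros} $\{z_l'\}_l$ with $|z_l'|<1/R$, the annulus factors in $\mathcal{S}_q$ cancel exactly against $\prod_{j=1}^{i}\frac{z-z_j}{1/z-z_j}$, yielding
\begin{equation*}
\prod_{j=1}^{i}\frac{z-z_j}{1/z-z_j}\,\mathcal{S}_f \;=\; h(z)\,\mathcal{S}_{\tilde f}(z), \qquad h(z)\isdef\prod_l\frac{1/z-z_l'}{z-z_l'}.
\end{equation*}
I would then check $h\in\mathcal{A}^-_{\hat\nu}$ by Laurent-expanding each factor on $|z|=1$: this produces only strictly negative powers of $z$, with coefficients that decay at geometric rate $|z_l'|<1/R$. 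Since the rate of $\hat\nu$ is $R$, these are summable against $\hat\nu$, so each factor, and hence the finite product $h$, lies in $\mathcal{A}^-_{\hat\nu}$.

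For the conclusion, I would use that $\tilde f$ is nonzero on the closed unit disk (since $q$ contains all zeros of $f$ there), so the product in Theorem~\ref{thmnozeros} applied to $\rho_{\tilde f}$ is empty, and the hypothesis $(\tilde a,\tilde b)\in \hat l_{\nu}$ translates into $\mathcal{P}_-(\mathcal{S}_{\tilde f})\in\mathcal{A}^-_{\hat\nu}$. Decomposing $\mathcal{S}_{\tilde f}=g^++g^-$ with $g^+=\mathcal{P}_+(\mathcal{S}_{\tilde f})\in\mathcal{A}_1^+$ and $g^-=\mathcal{S}_{\tilde f}-g^+\in\mathcal{A}^-_{\hat\nu}$ (the latter differs from $\mathcal{P}_-(\mathcal{S}_{\tilde f})$ by only a constant), I apply Lemma~\ref{lemgm} to each summand, using that $h\in\mathcal{A}^-_{\hat\nu}\subset\mathcal{A}_1$ with $\mathcal{P}_-(h)=h\in\mathcal{A}^-_{\hat\nu}$: this yields $\mathcal{P}_-(hg^\pm)\in\mathcal{A}^-_{\hat\nu}$, and summing gives $\mathcal{P}_-(h\,\mathcal{S}_{\tilde f})\in\mathcal{A}^-_{\hat\nu}$. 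This verifies~(b) and completes the proof via Theorem~\ref{thmnozeros}. The main subtlety is the rate-matching in the second paragraph: the fact that annulus zeros are separated off ensures the \emph{strict} inequality $|z_l'|<1/R$ for the inner zeros, which is exactly what is needed to place $h$ in $\mathcal{A}^-_{\hat\nu}$ rather than in a strictly larger algebra and to make Lemma~\ref{lemgm} applicable.
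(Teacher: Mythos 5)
Your proof is correct, but it follows a genuinely different route from the paper's. The paper first invokes Lemma~\ref{bsmod} to refactor $f=\hat{q}(z)\hat{f}(z)$ with $\hat{q}$ a polynomial having \emph{positive canonical weights} at all of its zeros inside the unit circle (so that $\rho_{\hat q}$ is an honest Bernstein--Szeg\H{o} measure with eventually constant recurrence coefficients) and $\hat f$ zero-free on the closed unit disk; it then shows $\hat{f}\in{\mathcal A}^+_{\hat\nu}$, deduces ${\mathcal P}_-({\mathcal S}_{\hat f})\in{\mathcal A}^-_{\hat\nu}$, and concludes via the multiplication theorem, Theorem~\ref{expdecayzeros}, applied to $f=\hat{q}\cdot\hat{f}$. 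That detour is forced on the paper because the naive factor $q$ need not have positive canonical weights at its zeros (Remark~\ref{remcv}), so $\rho_q$ cannot be used directly as a Bernstein--Szeg\H{o} factor. You sidestep this entirely by never attaching a measure to $q$: it enters only through the exact identity $\prod_{j=1}^{i}\frac{z-z_j}{1/z-z_j}\,{\mathcal S}_f=h\,{\mathcal S}_{\tilde f}$ with $h=\prod_l\frac{1/z-z_l'}{z-z_l'}$ built from the strictly inner zeros, and you verify the hypothesis of Theorem~\ref{thmnozeros} for $\rho_f$ directly with Lemma~\ref{lemgm}. This is essentially the computation that lives inside the paper's proofs of Theorems~\ref{thmnozeros} and \ref{expdecayzeros}, inlined; what it buys is a shorter argument that dispenses with Lemma~\ref{bsmod} and Theorem~\ref{expdecayzeros} altogether, at the cost of reproving a special case of the latter by hand. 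All the individual steps check out: the canonical-weight hypothesis of Theorem~\ref{thmnozeros} is supplied by $\rho_f\in{\mathcal M}_R$; the strict inequality $|z_l'|<1/R$ together with $\hat\nu(n)^{1/n}\to R$ does place $h$ in ${\mathcal A}^-_{\hat\nu}$; and ${\mathcal S}_{\tilde f}\in{\mathcal A}_1$ because $\tilde f$ is zero-free on a neighborhood of the closed unit disk, so ${\mathcal P}_+({\mathcal S}_{\tilde f})\in{\mathcal A}_1^+$ as Lemma~\ref{lemgm} requires. The one point you should make explicit is the identification (Remark~\ref{equiv}) of $\tilde f$ with the Jost function of $(\tilde a,\tilde b)$, which is what lets you read the hypothesis $(\tilde a,\tilde b)\in\hat l_\nu$ as ${\mathcal P}_-({\mathcal S}_{\tilde f})\in{\mathcal A}^-_{\hat\nu}$ via Theorem~\ref{thmnozeros} with an empty product; the paper's own proof leans on the same identification, so this is not a gap.
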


\begin{proof}
Using the notation in Lemma~\ref{bsmod}, we can write $f(z)=\hat{q}(z)\hat{f}(z)$ and $\tilde{f}(z)=\hat{f}(z)\tilde{q}(z)$, where $\hat q(z)$ is a polynomial having positive canonical weights at all zeros inside the unit circle (i.e. $\rho_{\hat{q}}$ is a Bernstein-Szeg\H{o} measure) and $\tilde{q}(z)$ is a polynomial with real zeros with absolute values in $(1,R)\cup(R,\infty)$. Moreover, the zeros of $\tilde{q}(z)$ for $1<|z|< R$ are zeros of $\tilde f$. Similarly to the proof of Theorem~\ref{expdecaynozeros} we deduce that $\tilde{f}\in {\mathcal A}_{\hat\nu}^{+}$. Using $\tilde{f}(z)=\hat{f}(z)\tilde{q}(z)$, the properties of the zeros of $\tilde q$, the fact that $\frac{1}{z-z_0}\in  {\mathcal A}_{\hat\nu}^{+}$ for $|z_0|>R$ and Lemma~\ref{kre}, we see that $\hat{f}(z)\in {\mathcal A}_{\hat\nu}^{+}$ and $\hat{f}(z)$ is nonzero for $|z|\leq 1$. Thus $\hat{f}(1/z)\in {\mathcal A}_{\hat\nu}^{-}$, $\frac{1}{\hat{f}(z)}\in  {\mathcal A}_{1}^{+}$ and therefore ${\mathcal P}_{-} (\mathcal{S}_{\hat{f}})\in {\mathcal A}_{\hat\nu}^{-}$ by Lemma~\ref{lemgm}. The proof now follows from Theorem~\ref{expdecayzeros}.
\end{proof}

Our final result is a simplification of Theorem 14 in \cite{ger3}.
\begin{thm}\label{expdecay}
Let $\nu$ be an increasing Beurling weight with $R>1$ and $\rho (x)$ be a positive measure with absolutely continuous
part $\sigma (\theta)$, $x=\cos\theta$, $0<\theta<\pi$ and recurrence coefficients $(a,b)$.  Set
$\frac{\sigma(-\theta)}{\sin(-\theta)} = \frac{\sigma(\theta)}{\sin\theta}$,
$0<\theta<\pi$. 

If $\rho\in{\mathcal M}_R$ and there exists a polynomial $d(z)$ with real coefficients so that $\log\left(\frac{\sigma (z)d(z)d(1/z)}{z-1/z}\right)\in {\mathcal A}_{\hat\nu}$ then $(a,b)\in\hat l_{\nu}$.

Conversely, if $(a,b)\in \hat l_{\nu}$ and  $\sigma (z)$ has a meromorphic extension to 
$1/R <z<R$ such that $|\sigma (z)|<\infty$ for $|z|=R$ then $\rho\in{\mathcal M}_R$ and there exists a polynomial $d(z)$ with real coefficients so that $\log\left(\frac{\sigma (z)d(z)d(1/z)}{z-1/z}\right)\in {\mathcal A}_{\hat\nu}$. 
\end{thm}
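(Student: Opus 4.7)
The plan is to handle the two implications separately, reducing each to the purely absolutely continuous case of Theorem~\ref{expdecaynozeros} by extracting (or reinserting) Bernstein-Szeg\H{o} polynomial factors that absorb the mass points of $\rho$ and any zeros of $f$ in the closed annulus $1/R\le|z|\le R$.

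For the first implication, suppose $\rho=\rho_f\in\cMR$ and $\log W\in\mathcal A_{\hat\nu}$, with $W(z)=\sigma(z)d(z)d(1/z)/(z-1/z)=d(z)d(1/z)/(\pi i f(z)f(1/z))$. By Wiener-Levy both $W$ and $1/W$ lie in $\mathcal A_{\hat\nu}$, so the zero sets (with multiplicities) of $d(z)d(1/z)$ and $f(z)f(1/z)$ coincide on the annulus. I would first apply Lemma~\ref{bsmod} to factor $f=\hat q\,F$, with $\rho_{\hat q}$ Bernstein-Szeg\H{o} and $F$ nonzero on $[-R_-,R_+]$, and then further factor $F=q_{\mathrm{res}}G$ where $q_{\mathrm{res}}$ is a polynomial whose zeros are the remaining zeros of $F$ in $|z|\le R$ (all lying outside the unit circle, so that $\rho_{q_{\mathrm{res}}}$ is again Bernstein-Szeg\H{o} by Remark~\ref{bsremark}), and $G$ is analytic and nowhere vanishing on $|z|\le R$. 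Replacing $d$ by $\hat q\, q_{\mathrm{res}}$ (which does not alter the hypothesis on $\log W$, since the substitution multiplies $W$ by $h(z)h(1/z)$ for some $h$ analytic and nonzero on the annulus), the identity $W(z)=\sigma_G(z)/(z-1/z)$ holds, and Theorem~\ref{expdecaynozeros} applied to $\rho_G$ yields $(a^G,b^G)\in\hat l_\nu$. Theorem~\ref{bpoly} gives $(a^{\hat q},b^{\hat q})$ and $(a^{q_{\mathrm{res}}},b^{q_{\mathrm{res}}})$ eventually constant, hence in $\hat l_\nu$, and successive application of Theorem~\ref{expdecayzeros} and Theorem~\ref{sbweightmass} reassembles $f=\hat q\, q_{\mathrm{res}}\, G$ to give $(a,b)\in\hat l_\nu$.

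For the converse, assume $(a,b)\in\hat l_\nu$ and that $\sigma(z)$ extends meromorphically to $1/R<|z|<R$ with $|\sigma(z)|<\infty$ on $|z|=R$. By Remark~\ref{ed} and Remark~\ref{equiv}, $\rho=\rho_{\hat f_+}\in\cMR$, where $\hat f_+$ is the limit of Theorem~\ref{lnu}; equation~\eqref{derivative}, together with two applications of Lemma~\ref{kre} (to strip the factors $1-z$ and $1+z$ from $(1-z^2)\hat f_+'$) and then Lemma~\ref{hatnu}, upgrades $\hat f_+\in\mathcal A_\nu^+$ to $\hat f_+\in\mathcal A_{\hat\nu}^+$. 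The boundary hypothesis on $\sigma$ prevents $\hat f_+$ from vanishing on $|z|=R$, so the zero set $\{z_1,\dots,z_m\}$ of $\hat f_+$ in $|z|\le R$ is finite and closed under complex conjugation (since $\hat f_+$ has real Fourier coefficients). Set $d(z)=\prod_{j=1}^{m}(z-z_j)$. Iterated use of Lemma~\ref{kre} produces $\hat f_+/d\in\mathcal A_{\hat\nu}^+$, and since it is nowhere vanishing on $|z|\le R$ its reciprocal $g(z)=d(z)/\hat f_+(z)$ also lies in $\mathcal A_{\hat\nu}^+$ by the Banach algebra inversion theorem. A direct computation shows $W(z)=g(z)g(1/z)/(\pi i)\in\mathcal A_{\hat\nu}$, and $W$ is nowhere vanishing on $1/R\le|z|\le R$, so Wiener-Levy yields $\log W\in\mathcal A_{\hat\nu}$.

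The main technical hurdle is the zero-matching bookkeeping in Direction 1: one must properly account for the zeros of $\hat f_+$ at $z=\pm 1$ (where $z-1/z$ also vanishes) and any complex resonance zeros in $1<|z|\le R$, so that the replacement of $d$ by $\hat q\, q_{\mathrm{res}}$ is legitimate and the Bernstein-Szeg\H{o} factorizations align properly. Once this is done, the reduction to Theorem~\ref{expdecaynozeros} and the reassembly via Theorem~\ref{sbweightmass} and Theorem~\ref{expdecayzeros} are essentially routine.
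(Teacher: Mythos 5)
Your argument is correct and follows essentially the same route as the paper: the forward direction reduces to the nonvanishing absolutely continuous case (Theorem~\ref{expdecaynozeros}) and then reassembles the zeros and masses via Theorem~\ref{expdecayzeros} and Theorem~\ref{sbweightmass}, while the converse upgrades $\hat f_+$ to ${\mathcal A}^+_{\hat\nu}$ via \eqref{derivative}, Lemma~\ref{kre} and Lemma~\ref{hatnu}, divides out the zeros, and invokes inversion and Wiener--Levy. The only difference is cosmetic: you organize the factorization through Lemma~\ref{bsmod} and an auxiliary resonance polynomial, whereas the paper works directly with $d_1(z)$ (zeros of $f$ in $|z|\le 1$) and $d_2(z)$ extracted from the identity $d(z)d(1/z)=d_1(z)d_1(1/z)d_2(z)d_2(1/z)$.
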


\begin{proof}
Let $\rho=\rho_f\in\cM_R$ and $\log\left(\frac{\sigma(z)d(z)d(1/z)}{z-1/z}\right)\in {\mathcal A}_{\hat\nu}$. Since $\log\left(\frac{d(z)d(1/z)}{f(z)f(1/z)}\right)\in {\mathcal A}_{\hat\nu}$ we see that $d(z)d(1/z)$ must cancel all zeros of $f(z)$ for $\frac{1}{R}\leq|z|\leq R$. Moreover $\log((z-z_0)(1/z-z_0))\in {\mathcal A}_{\hat\nu}$ for $|z_0|<\frac{1}{R}$ and therefore without any restriction we can assume that $d(z)d(1/z)$ cancels all zeros of $f(z)$ for $|z|\leq R$.  If we set $d_1(z)=\prod_{j=1}^{M}(z-z_j)$ where $\{z_j\}$ are the zeros of $f$ for $|z|\leq 1$, then we must have  $d(z)d(1/z)=d_1(z)d_1(1/z)d_2(z)d_2(1/z)$ for some polynomial $d_2(z)$ with real coefficients, which has no zeros on the unit circle. 
From Theorem~\ref{expdecaynozeros}, the recurrence coefficients $(\hat a, \hat b)$ for $d\hat\rho(x)=\sigma(\theta)|d(e^{i\theta})|^2 dx$ are in $\hat l_{\nu}$. Applying Theorem~\ref{expdecayzeros} we see that the recurrence coefficients $(\tilde a, \tilde b)$ for $\sigma(\theta)|d_1(e^{i\theta})|^2 dx$ are also in $\hat l_{\nu}$. Theorem~\ref{sbweightmass} now completes the proof in this direction.

Conversely, if $(a,b)\in \hat l_{\nu}$ then Theorem~\ref{lnu} and Lemmas~\ref{hatnu}-\ref{kre} show that $\hat f_+(z)\in {\mathcal A}^+_{\hat\nu}$.  
Moreover, if we set $d(z)=\prod_{j=1}^{M}(z-z_j)$ where $\{z_j\}$ are the zeros of $\hat{f}_+(z)$ for $|z|< R$, then the function $\hat{f}_+(z)/d(z)$ belongs to $ {\mathcal A}_{\hat\nu}^{+}$ and is nonzero for $|z|\leq R$. Therefore $\log\left(\frac{d(z)}{\hat{f}_+(z)}\right)\in {\mathcal A}_{\hat\nu}^{+}$ from which the result follows.
\end{proof}

\section*{Acknowledgments}
We would like to thank a referee for helpful comments and suggestions that led to an improved version of this paper.

\begin{bibdiv}
\begin{biblist}

\bib{ak}{book}{
   author={Akhiezer, N. I.},
   title={The classical moment problem and some related questions in
   analysis},
   series={Translated by N. Kemmer},
   publisher={Hafner Publishing Co., New York},
   date={1965},
}

\bib{ba}{article}{
   author={Baxter, Glen},
   title={A convergence equivalence related to polynomials orthogonal on the
   unit circle},
   journal={Trans. Amer. Math. Soc.},
   volume={99},
   date={1961},
   pages={471--487},
}
\bib{cc}{article}{
   author={Case, K.M.},
   author={Chiu, S.C.},
   title={The discrete version of the Marchenko equations in the inverse scattering problem},
   journal={J. Math. Phys.},
   volume={11},
   date={1973},
   pages={1643--1647}
}

\bib{ds}{article}{
   author={Damanik, David},
   author={Simon, Barry},
   title={Jost functions and Jost solutions for Jacobi matrices. II. Decay
   and analyticity},
   journal={Int. Math. Res. Not.},
   date={2006},
   pages={Art. ID 19396, 32},
}

\bib{do}{article}{
   author={Deift, P.}, 
   author={{\"O}stensson, J.},
   title={ A {R}iemann-{H}ilbert approach to some theorems on {T}oeplitz
  operators and orthogonal polynomials},
   journal={J. Approx, Theory},
   volume={139},
   date={2006},
   pages={144--171},
}

\bib{grs}{book}{
   author={Gelfand, I.},
   author={Raikov, D.},
   author={Shilov, G.},
   title={Commutative Normed Rings},
   publisher={Chelsea, New York},
   date={1964},
}

\bib{ger1}{article}{
   author={Geronimo, Jeffrey S.},
   title={A relation between the coefficients in the recurrence formula and
   the spectral function for orthogonal polynomials},
   journal={Trans. Amer. Math. Soc.},
   volume={260},
   date={1980},
   number={1},
   pages={65--82},
}
\bib{ger3}{article}{
   author={Geronimo, Jeffrey S.},
   title={Scattering theory, orthogonal polynomials, and $q$-series},
   journal={SIAM J. Math. Anal.},
   volume={25},
   date={1994},
   number={2},
   pages={392--419},
}
\bib{gc}{article}{
   author={Geronimo, J. S.},
   author={Case, K. M.},
   title={Scattering theory and polynomials orthogonal on the real line},
   journal={Trans. Amer. Math. Soc.},
   volume={258},
   date={1980},
   number={2},
   pages={467--494},
}
\bib{gm}{article}{
   author={Geronimo, J. S.},
   author={Mart{\'{\i}}nez-Finkelshtein, A.},
   title={On extensions of a theorem of Baxter},
   journal={J. Approx. Theory},
   volume={139},
   date={2006},
   number={1-2},
   pages={214--222},
}
\bib{ger2}{article}{
   author={Geronimo, Jeffrey S.},
   author={Nevai, Paul G.},
   title={Necessary and sufficient conditions relating the coefficients in the 
   recurrence formula to the spectral function of orthogonal polynomials},
   journal={SIAM J. Math. Anal.},
   volume={14},
   date={1983},
   pages={622--637},
}
\bib{geronimus}{book}{
   author={Geronimus, L. Ya.},
   title={Orthogonal polynomials: Estimates, asymptotic formulas, and series
   of polynomials orthogonal on the unit circle and on an interval},
   series={Authorized translation from the Russian},
   publisher={Consultants Bureau, New York},
   date={1961},
}
\bib{gr}{book}{
   author={Geronimus, L. Ya.},author={Szeg{\H{o}}, G{\'a}bor},
   title={American Mathematical Society Translations. Ser. 2, Vol. 108},
   note={Two papers on special functions},
   publisher={American Mathematical Society, Providence, R. I.},
   date={1977},
}
\bib{gus}{article}{
   author={Guseinov, G. S.},
   title={The determination of an infinite Jacobi matrix from the scattering 
    data},
    journal={Soviet Math. Dokl.},
   volume={17},
   date={1976},
   pages={596--600},
}
\bib{kn}{article}{
   author={Killip, Rowan},
   author={Nenciu, Irina},
   title={Matrix models for circular ensembles},
   journal={Int. Math. Res. Not.},
   date={2004},
   number={50},
   pages={2665--2701}
}
\bib{kr}{article}{
   author={Krein, M. G.},
   title={Integral equations on the half-line with a kernel depending on the 
         difference of the arguments},
   journal={Mem. Amer. Math. Soc. Transl},
   volume={22},
   date={1966},
   pages={163-288},
}
\bib{m}{article}{
  author={Mart{\'{\i}}nez-Finkelshtein, A.},
   title={Szeg\H{o}\ polynomials: a view from the Riemann-Hilbert window},
   journal={Electron. Trans. Numer. Anal.},
   volume={25},
   date={2006},
   pages={369--392},
}
\bib{mms}{article}{
 author={Mart{\'{\i}}nez-Finkelshtein, A.},
   author={McLaughlin, K. T.-R.},
   author={Saff, E. B.},
   title={Asymptotics of orthogonal polynomials with respect to an analytic
   weight with algebraic singularities on the circle},
   journal={Int. Math. Res. Not.},
   date={2006},
   pages={Art. ID 91426, 43},
}

\bib{nevai}{article}{
   author={Nevai, Paul G.},
   title={Orthogonal polynomials},
   journal={Mem. Amer. Math. Soc.},
   volume={18},
   date={1979},
   number={213},
}
\bib{nt}{article}{
   author={Nevai, P.},
   author={Totik, V},
   title={Orthogonal polynomials and their zeros},
   journal={Acta Sci. Math. (Szeged)},
   volume={53},
   date={1989},
   pages={99--104},
}
\bib{simon1}{article}{
   author={Simon, Barry},
   title={Orthogonal polynomials with exponentially decaying recursion
   coefficients},
   conference={
      title={Probability and mathematical physics},
   },
   book={
      series={CRM Proc. Lecture Notes},
      volume={42},
      publisher={Amer. Math. Soc., Providence, RI},
   },
   date={2007},
   pages={453--463},
}
\bib{simon}{book}{
   author={Simon, Barry},
   title={Orthogonal polynomials on the unit circle. Part 1},
   series={American Mathematical Society Colloquium Publications},
   volume={54},
   note={Classical theory},
   publisher={American Mathematical Society, Providence, RI},
   date={2005},
}
\bib{simon2}{article}{
   author={Simon, Barry},
   title={Orthogonal polynomials with exponentially decaying recursion
   coefficients},
   conference={title={Probability and mathematical physics},},
   book={
      series={CRM Proc. Lecture Notes},
      volume={42},
      publisher={Amer. Math. Soc., Providence, RI},
   },
   date={2007},
   pages={453--463},
}

\bib{sz}{book}{
   author={Szeg{\H{o}}, G{\'a}bor},
   title={Orthogonal polynomials},
   edition={4},
   note={American Mathematical Society, Colloquium Publications, Vol.
   XXIII},
   publisher={American Mathematical Society, Providence, R.I.},
   date={1975},
}
\end{biblist}
\end{bibdiv}
\end{document}